\theoremstyle{plain}
\newtheorem{theorem}{Theorem}[section]
\newtheorem{proposition}[theorem]{Proposition}
\newtheorem{lemma}[theorem]{Lemma}
\theoremstyle{definition}
\newtheorem{definition}[theorem]{Definition}
\newcommand{\EM}{{\mathcal{M}}}
\newcommand{\ZZ}{\mathbb{Z}}
\newcommand{\FF}{\mathbb{F}}
\newcommand{\B}{\mathcal{B}}
\newcommand{\M}{\mathcal{M}}
\newcommand{\F}{\mathcal{F}}
\newcommand{\Legendre}[2]{\left(\frac{#1}{#2}\right)}
\renewcommand{\Re}{\mathrm{Re}}
\renewcommand{\Im}{\mathrm{Im}}
\begin{document}

\title{On the Function Field Analogue of Landau's Theorem on Sums of Squares}
\author{Lior Bary-Soroker}
\address{Raymond and Beverly Sackler School of Mathematical Sciences, Tel Aviv University, Tel Aviv 69978, Israel}
\email{barylior@post.tau.ac.il}

\author{Yotam Smilansky}
\address{Raymond and Beverly Sackler School of Mathematical Sciences, Tel Aviv University, Tel Aviv 69978, Israel}
\email{yotamsmi@post.tau.ac.il}

\author{Adva Wolf}
\address{Raymond and Beverly Sackler School of Mathematical Sciences, Tel Aviv University, Tel Aviv 69978, Israel}
\email{wolf.adva@gmail.com}

\subjclass[2010]{}

\maketitle

\begin{abstract}
This paper deals with function field analogues of the famous theorem of Landau which gives the asymptotic density of sums of two squares in $\ZZ$. 

We define the analogue of a sum of two squares in $\FF_q[T]$, $q$ odd and 
estimate the number $B_q(n)$ of such polynomials of degree $n$ in two cases. The first case is when $q$ is large and $n$ fixed and the second case is  when $n$ is large and $q$ is fixed. Although the methods used and main terms computed in each of the two cases differ, the two iterated limits of (a normalization of) $B_q(n)$ turn out to be exactly the same.
\end{abstract}

\section{Introduction}
\subsection{Landau's Classical Theorem Regarding Sums of Two Squares}
Let $b(n)$ be the characteristic function of integers that are representable as a sum of two squares and let 
\[
B(x) =\sum_{n\leq x} b(n)
\]
be the number of such integers up to $x$. 
Landau's Theorem \cite{Landau} gives an asymptotic formula for $B(x)$:
\begin{equation}\label{thm:Landau}
B(x) =  K \frac{x}{\sqrt{\log x}} + O\left( \frac{x}{\log^{3/2} x}\right), \qquad x\to \infty,
\end{equation}
where 
\begin{equation}
K = \frac{1}{\sqrt{2}}\prod_{p \equiv 3 \hskip-7pt\pmod 4}(1-p^{-2})^{-1/2}\approx 0.764  
\end{equation}
is the Landau-Ramanujan constant.

In this work we study function field analogues of Landau's theorem in the two limits of a large degree and of a large finite field.

\subsection{Function Fields}
Let $q$ be a prime power. \emph{We always assume that $q$ is odd.} Denote by $\FF_q[T]$ the ring of polynomials over the finite field $\FF_q$ and by 
\[
\EM_{n,q} = \{f\in \FF_q[T] : f=T^n + a_1 T^{n-1} + \cdots + a_n\}
\] 
the subset of monic polynomials of degree $n$.  

To define the function field analogue of a sum of two squares, we recall that an integer is a sum of two squares if and only if it is a norm of a Gaussian integer, i.e., $n=a^2+b^2$ if and only if $n={\rm Norm}(a+bi)$,  $a+bi\in \ZZ[i]$. Thus we define:

\begin{definition}
Let $q$ be an odd prime power. For a polynomial $f\in \M_{n,q}$ we define the characteristic function:
\begin{equation}\label{eq:defbq}
b_q(f) = 
\begin{cases}
1, & f=A^2+TB^2 \mbox{ for } A,B\in \FF_q[T],\\
0, &\mbox{otherwise.}
\end{cases}
\end{equation}
and the counting function: 
\begin{equation}\label{eq:defBq}
B_q(n) = \sum_{f\in \EM_{n,q}} b_q(f).
\end{equation}
\end{definition}
In other words, $b_q(f)=1$ if and only if $f$ is a norm of an element of the ring extension $\FF_q[\sqrt{-T}]$, that plays the role of $\ZZ[i]$ in the function field  setting, and $B_q(n)$ counts the number of monic norms of degree $n$. 

We remark that it also makes perfect sense to consider the more general rings $\FF_q[\sqrt{\alpha T}]$, for a nonzero $\alpha\in \FF_q$, as analogues of $\ZZ[i]$. Then a norm will have the form $A^2-\alpha TB^2$. The theory and results in this generalization are exactly the same; so for the sake of simple exposition we restrict to $\alpha=-1$.

Another possibility for an analogue notion of sums of two squares is the naive one; namely, $A^2+B^2$. When $q\equiv 1\pmod 4$, then $-1= u^2$ for some $u\in \FF_q$, so 
\[
A^2+B^2=(A+uB)(A-uB)
\] 
and the problem becomes linear and thus easy. Regarding the case $q\equiv 3\pmod 4$, or more generally considering $A^2+\alpha B^2$ for $\alpha\in \FF_q$ not a square, related problems were considered in \cite{MW,HMW}. Our methods work in this case too, with changes that come from taking $\FF_{q^2}[T]$ as the analogue of the ring $\ZZ[i]$ instead of $\FF_q[\sqrt{-T}]$; however, we do not pursue this direction here.

It is interesting to find an asymptotic formula for $B_q(n)$ as $q^n\to \infty$. In this work we obtain such formula in each of the two sub-limits:     \textbf{the large finite field limit} $q\to \infty$ and \textbf{the large degree limit} $n\to \infty$. 

We remark that in characteristic $2$, $b_q(f)=1$ for all polynomials, since raising to a square is a homomorphism. 

\subsection{Large Finite Field Limit} 
In this limit we obtain the first two terms in the asymptotic formula for $B_q(n)$:
\begin{theorem}\label{thm:LF}
For every $n\geq 2$, 
\begin{equation}\label{LandauLF}
B_q(n) = 
\frac{1}{4^n} \binom{2n}{n} q^n + c_n q^{n-1} + O_n(q^{n-2}), \qquad q\to \infty,
\end{equation}
where 
\begin{equation}
c_n=\frac{1}{2\cdot4^{n-1}} \binom{2(n-1)}{n-1}  +\frac{1}{4^{n-1}} \binom{2(n-2)}{n-2}.
\end{equation}
\end{theorem}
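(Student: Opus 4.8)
The plan is to parametrize the representation $f = A^2 + T B^2$ by the factorization $f = N(g)$ where $g = A + \sqrt{-T}\, B$ lives in $R = \FF_q[\sqrt{-T}]$, and then count via the arithmetic of $R$. Since $R = \FF_q[S]$ with $S = \sqrt{-T}$ (so $T = -S^2$) is itself a polynomial ring, $f \in \EM_{n,q}$ is a norm precisely when, writing $f(T) = f(-S^2) \in \FF_q[S]$ and factoring it into monic irreducibles over $\FF_q[S]$, every irreducible factor that is \emph{inert} from $\FF_q[T]$ (equivalently, that is not fixed by $S \mapsto -S$) appears to an even power, while split and ramified factors are unconstrained. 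Concretely: an irreducible $P(T) \in \FF_q[T]$ of degree $d$ either stays irreducible in $\FF_q[S]$ (inert; this happens iff $-T$, i.e. $P(0)\cdot(-1)^{\deg}$-type condition, is a nonsquare mod $P$ — the ``$p\equiv 3 \pmod 4$'' case) or factors as a product of two conjugate irreducibles of degree $d$ (split), with the single exception of $P(T) = T$ itself, which ramifies as $S^2$. So $f = A^2+TB^2$ with $f$ monic iff in the $\FF_q[S]$-factorization of $f(-S^2)$ the inert primes occur to even order.

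The first step is therefore to set up the generating-function bookkeeping. Let $c_q(n)$ be the number of monic $f \in \EM_{n,q}$ that are norms; then $B_q(n) = c_q(n)$, and the Dirichlet series $\sum_n B_q(n) u^n$ factors as an Euler product over monic irreducibles $P \in \FF_q[T]$: split and ramified primes contribute $(1-u^{\deg P})^{-1}$ (all powers allowed) — but one must be careful, because distinct irreducibles in $\FF_q[T]$ can give the ``same'' norm only in controlled ways; the cleanest route is to count directly in $\FF_q[S]$. A monic norm of degree $n$ in $T$ corresponds to a degree-$2n$ polynomial in $S$ of the form $h(S)\overline{h(S)}$ up to the ramified factor... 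Rather than push the exact identity, the efficient approach for the asymptotics is: the number of norms equals (number of ways to write as $A^2 + TB^2$) minus overcounting, and one shows the generating function is
\[
\sum_{n\ge 0} B_q(n) u^n = \prod_{P \text{ split or ram.}} \frac{1}{1-u^{\deg P}} \prod_{P \text{ inert}} \frac{1}{1-u^{2\deg P}},
\]
which, using that (split primes of degree $d$) $+ 2\cdot$(inert primes of degree $d$) relates to $\pi_q(d)$ via the zeta function of $\FF_q[T]$, can be rewritten as a ratio of $Z(u) = (1-qu)^{-1}$ factors. The key algebraic identity will be of the shape $\sum_n B_q(n)u^n = \dfrac{(1-qu^2)^{1/2}}{(1-qu)}\cdot(\text{correction at }P=T)\cdot G(u)$ where $G$ is analytic and ``small'' in a sense made precise below.

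The second step extracts the expansion in powers of $q$ for \emph{fixed} $n$. Here the point is that $B_q(n)$, as a function of $q$, is a polynomial-like quantity of degree $n$ in $q$ whose top coefficients come entirely from the singular factor $(1-qu^2)^{1/2}/(1-qu)$; the remaining Euler factors $G(u)$ differ from $1$ only at terms involving $u^{\deg P}$ with $\deg P \ge 1$ but whose $q$-dependence is lower order — precisely, replacing each true count of irreducibles of degree $d$ by its main term $q^d/d$ introduces an error $O(q^{d/2})$ per factor, and one checks this only affects the $q^{n-2}$ and lower coefficients. Thus $B_q(n) = [u^n]\, \dfrac{(1-qu^2)^{1/2}}{1-qu} + (\text{contribution of }P=T\text{ and }\deg P=1,2) \cdot q^{n-1} + O_n(q^{n-2})$. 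Expanding $(1-qu^2)^{1/2} = \sum_k \binom{1/2}{k}(-1)^k q^k u^{2k}$ and dividing by $1-qu$, the coefficient of $u^n$ in the leading term is $\sum_{k\le n/2}\binom{1/2}{k}(-1)^k q^n = q^n \sum_k \binom{1/2}{k}(-1)^k$; using the identity $\binom{1/2}{k}(-1)^k = \dfrac{1}{4^k}\binom{2k}{k}\dfrac{1}{1-2k}\cdot(\text{sign})$, more usefully $[u^{2m}](1-u^2)^{-1/2}\cdot$-type manipulations give $\sum_{k=0}^{\lfloor n/2\rfloor}\binom{1/2}{k}(-1)^k = \dfrac{1}{4^n}\binom{2n}{n}$ after telescoping (this is the standard identity $\sum_{k=0}^m \binom{1/2}{k}(-1)^k = (-1)^m\binom{1/2-1}{m} = 4^{-2m}\binom{2m}{m}$-style — the combinatorial identity $\sum_{j=0}^{n}(-1)^j\binom{1/2}{j}=\binom{-1/2}{n}(-1)^n = 4^{-n}\binom{2n}{n}$). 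That yields the main term $\frac{1}{4^n}\binom{2n}{n}q^n$. For $c_n$, one tracks the $q^{n-1}$ coefficient: it gets one contribution from the ramified prime $P=T$ (which shifts a $(1-u)^{-1}$ into the product, i.e. effectively multiplies the main generating function by $(1-qu^2)^{-1/2}(1-u)^{-1}\cdot(\ldots)$ at the $q^{n-1}$ level, producing $\frac{1}{2\cdot 4^{n-1}}\binom{2(n-1)}{n-1}$) and one from degree-$\le 2$ split/inert primes contributing the second binomial term $\frac{1}{4^{n-1}}\binom{2(n-2)}{n-2}$; matching these against the stated $c_n$ is the routine-but-delicate bookkeeping.

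The main obstacle, and where I expect to spend the most care, is \emph{not} the singularity analysis but the exact Euler-product identity in step one: making precise how counting monic $f = A^2 + TB^2$ corresponds bijectively to factorization data in $\FF_q[S]$ without over- or under-counting — in particular handling units of $\FF_q^\times$ (the ambiguity $g \mapsto \lambda g$, $\lambda \in \FF_q^\times$, versus the monic normalization) and the ramified prime $T$ cleanly — and then rigorously justifying that the ``error'' Euler factors $G(u)$ contribute only at order $q^{n-2}$. Once the generating function is pinned down, the rest is a finite computation: expand, read off coefficients of $u^n$ in the $q^n$, $q^{n-1}$ strata, and simplify the resulting sums of central binomial coefficients to the closed forms in the statement.
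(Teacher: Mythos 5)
Your overall strategy --- characterize norms via even multiplicity of inert primes, form the Euler-product generating function $D^*(u)=\sum_n B_q(n)u^n$, and extract the coefficient of $u^n$ as an expansion in powers of $q$ --- is a legitimate alternative route; the paper instead partitions the set of norms of degree $n$ by factorization type ($\F_{1,n},\dots,\F_{5,n}$) and counts each piece directly, reserving the Euler product for the large-degree regime. But note that the obstacle you single out (over/undercounting in the correspondence with $\FF_q[S]$, units, the ramified prime) is actually the easy part: once one knows that $b_q(f)=1$ iff every inert prime divides $f$ to even multiplicity (Theorem~\ref{thm:Fermat}), multiplicativity gives the Euler product immediately, with $P=T$ contributing an unconstrained factor $(1-u)^{-1}$.

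The genuine gap is in the coefficient extraction. Since split and inert primes of degree $d$ each number roughly $q^d/(2d)$, the singular part of $D^*(u)$ near $u=1/q$ is $(1-qu)^{-1/2}$ (equivalently $D^*(u)^2=\varphi(u)/(1-qu)$ with $\varphi$ regular at $u=1/q$, as in \eqref{eq:D2}), \emph{not} $(1-qu^2)^{1/2}/(1-qu)$. Your factor has $[u^n]$ equal to $\sum_{k\le n/2}\binom{1/2}{k}(-1)^k q^{n-k}=q^n-\tfrac12 q^{n-1}-\cdots$, with leading term $q^n$; the step where you pull $q^n$ out of the sum and invoke $\sum_k(-1)^k\binom{1/2}{k}=4^{-n}\binom{2n}{n}$ is invalid because the power of $q$ depends on $k$. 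The correct main term comes from $[u^n](1-qu)^{-1/2}=\binom{2n}{n}q^n/4^n$. More seriously, to obtain $c_n$ one must track \emph{all} sources of $q^{n-1}$ corrections: the ramified factor $(1-u)^{-1}$; the deficit $\pi_{q;\alpha}(1)=(q-1)/2$ versus $q/2$; the exact value of $\pi_{q;\alpha}(2)$, which differs between the two residue classes at order $q$ (this is \eqref{piqep2}); the inert factor contributing $(1-qu^2)^{-1/2}$; and the discrepancy between $\prod_d(1-u^d)^{-q^d/(2d)}$ and $(1-qu)^{-1/2}$, which already at order $u^2$ contributes a term $\tfrac{q}{4}u^2$. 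Several of these individually produce terms comparable in size to $c_nq^{n-1}$ and partially cancel; your sketch names only two sources and computes neither, so the $q^{n-1}$ coefficient is not established. This bookkeeping is precisely what the paper's partition into $\F_{1,n},\dots,\F_{5,n}$ together with the exact low-degree prime counts of Lemma~\ref{lem:piqeps} accomplishes, and it is where the real work of the theorem lies.
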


%The proof of  \eqref{thm:Landau} does not seem to work in the limit of a large finite field. 
%We present an approach which 
Our proof is combinatoric/probabilistic in nature and is based on Ewens' sampling formula\footnote{
This formula was first used in 1972 in the study of the sampling distribution of allele frequencies in a population undergoing neutral selection (see \cite{Ewens} for the original paper). It has since been applied in a variety of fields, see the recent survey paper \cite{Crane}. 
%
% the sampling theory of neutral alleles \cite{Ewens}. In biology it is most easily described in terms of sequential sampling of animals from an infinite collection of distinguishable specie, see \cite{}.
} and on the Riemann hypothesis for rational function fields, which is an elementary theorem; that is, we use a prime number theorem for arithmetic progressions modulo $T$ with an explicit error term, see \eqref{PPTAP}.

\subsection{Large Degree Limit}
We say that a polynomial $P\in \FF_q[T]$ is \textbf{prime} if it is irreducible and monic. We denote the \textbf{Legendre symbol} (see \cite[Chapter 1]{Rosen}) by 
\begin{equation}\label{LegendreSymb}
\Legendre{f}{P} = f^{\frac{|P|-1}{2}} \pmod P,
\end{equation}
where $|P|=q^{\deg P}$ (see \cite[Chapter 3]{Rosen} for details). Note that $\Legendre{f}{P}\in \{0,1,-1\}$. 
Then we obtain:
\begin{theorem}\label{thm:xLandauLD}
Let $q$ be an odd prime power. Then 
\begin{equation}\label{eq:thm:LandauLD}
B_q(n) = \frac{K_q}{\sqrt{\pi}} \cdot\frac{q^n}{\sqrt{n}}  + O_q\Big(\frac{q^n}{n^{3/2}}\Big), \qquad n\to \infty
\end{equation}
where 
\begin{equation}\label{eq:Kq}
K_q = \left(1-q^{-1}\right)^{-\frac12} \prod_{\Legendre{P}{T}=-1}(1-|P|^{-2})^{-\frac12}. 
\end{equation}
\end{theorem}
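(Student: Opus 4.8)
The plan is to compute the Dirichlet/power series $\B_q(u)=\sum_{n\ge 0}B_q(n)u^n$ in closed form and then extract the asymptotics by singularity analysis, in the spirit of the classical proof of \eqref{thm:Landau}. The starting observation is arithmetic: $f=A^2+TB^2$ exactly when $f$ is a norm from $R:=\FF_q[\sqrt{-T}]$. Putting $S=\sqrt{-T}$ we have $R=\FF_q[S]$, a polynomial ring, hence a PID, and the norm is $N(A(T)+SB(T))=A^2+TB^2$, which maps monic polynomials of degree $n$ in $S$ bijectively onto monic norms of degree $n$ in $T$. The quadratic extension $\FF_q(S)/\FF_q(T)$ ramifies only at $T=0$ and $T=\infty$, and its associated quadratic character is $P\mapsto\Legendre{P}{T}$; this follows from quadratic reciprocity in $\FF_q[T]$ (\cite[Chapter 3]{Rosen}), checking $q\equiv 1$ and $q\equiv 3\bmod 4$ separately. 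Since $R$ is a PID, the ideal $(f)$ is a norm of an ideal iff $f=N(g_0)$ for a monic $g_0$ (comparing leading coefficients kills the ambient unit), so by the standard ideal theory of quadratic extensions, for monic $f$ one has $b_q(f)=1$ iff $v_P(f)$ is even for every inert prime, i.e. every $P$ with $\Legendre{P}{T}=-1$; the ramified prime $T$ and the split primes impose no condition. In particular $b_q$ is multiplicative.

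First I would convert this criterion into an Euler product
\[
\B_q(u)=\prod_{\Legendre{P}{T}\neq -1}(1-u^{\deg P})^{-1}\ \prod_{\Legendre{P}{T}=-1}(1-u^{2\deg P})^{-1}.
\]
Introducing $L(u,\chi)=\prod_P\bigl(1-\Legendre{P}{T}u^{\deg P}\bigr)^{-1}$ and $\zeta_q(u)=\sum_{f\ \mathrm{monic}}u^{\deg f}=(1-qu)^{-1}$, a factor-by-factor comparison yields, exactly as in Landau's argument,
\[
\B_q(u)=\zeta_q(u)^{1/2}\,L(u,\chi)^{1/2}\,H_q(u),\qquad H_q(u)=(1-u)^{-1/2}\prod_{\Legendre{P}{T}=-1}(1-u^{2\deg P})^{-1/2}.
\]
Next I would show $L(u,\chi)=1$: the field $\FF_q(\sqrt{-T})=\FF_q(S)$ is rational (genus $0$, with a rational place), so its zeta function is $(1-u)^{-1}(1-qu)^{-1}=\zeta_{\FF_q(T)}(u)\,L(u,\chi)$, forcing $L(u,\chi)=1$ (this also disposes of any ambiguity in the formal square roots). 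Hence $\B_q(u)=H_q(u)(1-qu)^{-1/2}$, and, writing $|P|=q^{\deg P}$, one checks $H_q(1/q)=K_q$ as in \eqref{eq:Kq}.

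For the analytic extraction, note that the product defining $H_q$ converges absolutely, hence $H_q$ is holomorphic and nonzero, on the disc $|u|<q^{-1/2}$; since $q^{-1/2}>q^{-1}$, the function $\B_q$ continues analytically past its dominant singularity $u=1/q$, which is of pure square-root type. Splitting
\[
\B_q(u)=K_q\,(1-qu)^{-1/2}+\bigl(H_q(u)-H_q(1/q)\bigr)(1-qu)^{-1/2},
\]
the first term contributes $K_q\,q^n\binom{2n}{n}4^{-n}=\tfrac{K_q}{\sqrt{\pi}}\,\tfrac{q^n}{\sqrt{n}}\bigl(1+O(1/n)\bigr)$ by Stirling, while the second term has a zero at $u=1/q$ that reduces the singularity to type $(1-qu)^{1/2}$ times a function analytic at $1/q$, whose $n$-th coefficient is $O_q(q^n n^{-3/2})$ — either by the Flajolet--Odlyzko transfer theorem on a $\Delta$-domain around $u=1/q$ (on which $H_q$ is analytic), or directly by Cauchy's formula on a Hankel contour. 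Adding the two contributions gives \eqref{eq:thm:LandauLD}.

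The main obstacle I anticipate is not the analytic step, which is routine once $\B_q$ is in the form $H_q(u)(1-qu)^{-1/2}$, but pinning down the arithmetic input cleanly: verifying that $\FF_q[\sqrt{-T}]$ is a PID (so that "norm of an ideal'' upgrades to "norm of an element'' with no loss in the monic/unit bookkeeping), that the relevant quadratic character is precisely $P\mapsto\Legendre{P}{T}$ for both residues of $q$ mod $4$, and that $L(u,\chi)=1$. All three are standard, but they are where care is needed.
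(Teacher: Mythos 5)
Your proposal is correct, and its arithmetic half coincides with the paper's: the same splitting criterion (the paper proves it by hand in Lemmas~2.1--2.4 and Theorem~2.5 rather than citing ideal theory in the PID $\FF_q[S]$, but the content is identical), the same Euler product, and the same identity $D^2=\zeta_q L\varphi$ with $L\equiv 1$ (which the paper gets immediately because the conductor $T$ is linear, while you deduce it from the genus-zero zeta function of $\FF_q(S)$ --- both valid). Where you genuinely diverge is the extraction of coefficients. The paper follows Landau literally: it integrates $D^*(u)/u^{n+1}$ over an explicit keyhole contour $C_\delta$ around the branch point at $u=1/q$, bounds the circular arcs, and evaluates the two segments hugging the cut via Watson's lemma. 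You instead split off the exact singular part $K_q(1-qu)^{-1/2}$, whose $n$-th coefficient is exactly $K_q q^n 4^{-n}\binom{2n}{n}=\frac{K_q}{\sqrt{\pi}}\frac{q^n}{\sqrt{n}}(1+O(1/n))$, and dispose of the remainder $(H_q(u)-H_q(1/q))(1-qu)^{-1/2}=O(|1-qu|^{1/2})$ by the Flajolet--Odlyzko transfer theorem on a $\Delta$-domain inside $|u|<q^{-1/2}$, where $H_q$ is analytic. Your route is shorter and has the pleasant side effect of making the agreement with the large-$q$ main term $4^{-n}\binom{2n}{n}q^n$ of Theorem~1.1 visible at a glance; the paper's route is self-contained (no transfer theorem as a black box) and deliberately mirrors the classical proof step by step. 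One small slip that does not affect anything: the norm is not a bijection from monic polynomials in $S$ onto monic norms in $T$ (an element and its conjugate share a norm); surjectivity, which holds after adjusting by the unit $\pm 1$, is all you actually use.
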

We note that the asymptotic formulas for $B_q(n)$ in the different limits   differ from each other.

One can easily see the agreement of \eqref{thm:Landau} and \eqref{eq:thm:LandauLD} where $x$ is replaced by $q^n=\#\EM_{n,q}$, $\log x$ by $n$ and $K$ by $K_q/\sqrt{\pi}$. 
We emphasize that in the function field setting, we have an extra factor of $1/\sqrt{\pi}$. 
To explain this difference, we note that the main term in \eqref{eq:thm:LandauLD} arises from integrating $D(s)$ around $s=1$ (see \eqref{eq:G2Gbar2}) and similarly in the number field setting. Since $D(s)$ has the square root of the $L$-function of the quadratic character as a factor (see \eqref{eq:D2}) which is analytic in a neighbourhood of $s=1$, and similarly in the number field setting, the extra $1/\sqrt{\pi}$ factor can be explained by the different values of the corresponding $L$-functions at $s=1$. 

The agreement is even deeper since the function field proof goes tightly along the lines of the proof in the classical setting.

\subsection{Comparison Between Large Field and Large Degree Limits}
In order to compare the limits we will consider the iterated limits of a normalized counting function $B_q(n)$. 

We note that
\[
\left(1-q^{-1}\right)^{-\frac12} = 1+O\big(q^{-1}\big)
\]
and since $|P|= q^{\deg P}$, we have 
\[
\sum_{\Legendre{P}{T}=-1}\log(1-|P|^{-2})^{-1/2}\ll \sum_{\Legendre{P}{T}=-1}|P|^{-2} \ll \sum_{d=1}^\infty q^{-2d} \sum_{P, \deg P=d}1
\ll \sum_{d=1}^{\infty} q^{-d} = O(q^{-1}).
\]
Thus
\[
\begin{split}
K_q &= \left(1-q^{-1}\right)^{-\frac12} \prod_{\Legendre{P}{T}=-1}(1-|P|^{-2})^{-1/2} \\
&= \left(1-q^{-1}\right)^{-\frac12}\exp\left(\sum_{\Legendre{P}{T}=-1}\log(1-|P|^{-2})^{-1/2} \right)
\\
&= (1+O(q^{-1}))e^{O(q^{-1})}=1+O(q^{-1}).
\end{split}
\]
So, by \eqref{eq:thm:LandauLD} we have 
\begin{equation}\label{eq:limitqn}
\lim_{q\to \infty} \lim_{n\to \infty} \frac{B_q(n)}{q^n/\sqrt{n}}=\lim_{q\to \infty} \frac{K_q}{\sqrt \pi} = 1/\sqrt{\pi}.
\end{equation}

To calculate the other iterated limit, we recall that by either Wallis' product or Stirling's formula, one has  
\[
\binom{2n}{n}\sim \frac{4^n}{\sqrt{\pi n}}, \qquad n\to \infty.
\]
So, by \eqref{LandauLF} we have 
\begin{equation}\label{eq:limitnq}
\lim_{n\to\infty} \lim_{q\to \infty} \frac{B_q(n)}{q^n/\sqrt{n}} = \lim_{n\to \infty} \frac{\sqrt{n}}{4^n} \binom{2n}{n}  = 1/\sqrt{\pi}.
\end{equation}
From \eqref{eq:limitqn} and \eqref{eq:limitnq} we conclude that the asymptotic formulas for $B_q(n)$ in the two regimes agree.

We do not know of an asymptotic formula for $B_q(n)$ in any more general sub-limits of $q^n\to \infty$. 

\section*{Acknowledgments}
The authors wish to thank Alexei Entin, Avner Kiro, Ron Peled and Zeev Rudnick for helpful discussions and the anonymous referee for her/his valuable remarks. 

The research leading to these results was partially supported by the Israel Science Foundation (grant No.
925/14).

\section{Elementary Theory}
Fix an odd prime power $q$ and put $S=\sqrt{- T}$. This defines an embedding $\FF_q[T]\subseteq \FF_q[S]$ of the corresponding univariate polynomial rings, namely the representation of $f(T)\in \FF_q[T]$ in $\FF_q[S]$ is $f(T) = f(-S^2)$. 
There are two automorphisms of $\FF_q[S]$  fixing $\FF_q[T]$ that are induced from the maps $S\mapsto \pm S$. We call the nontrivial automorphism \textbf{conjugation}. The norm map 
\[
N\colon \FF_q[S]\to \FF_q[T]
\]
is then defined by 
\[
N(h(S))=h(S)h(-S)\in \FF_q[T], \qquad h\in \FF_q[S].
\]
Each element in $\FF_q[S]$ can be represented uniquely as $A+SB$ with $A,B\in \FF_q[T]$ and in this representation the norm map takes the form 
\[
N(A+SB)=(A+SB)(A-SB)=A^2 + T B^2.
\]
Recalling the definitions of $b_q(f)$ in \eqref{eq:defbq}, we immediately get that for $f\in \EM_{n,q}$, $b_q(f) = 1$ if and only if $f$ is a norm (i.e.\ $f= N(h)$ for some $h\in \FF_q[S]$). 
Since the norm map is multiplicative, in order to characterize representable $f$'s we need to understand the primes of $\FF_q[S]/\FF_q[T]$. This is a straightforward analogue of the classical setting in which one studies primes of $\ZZ[i]$ in order to understand sums of two squares. The key point in the classical theory is that a prime number $p$ is inert in $\ZZ[i]$ if and only if $p\equiv 3\pmod 4$. We now develop the function field counterpart, which is easier than the number field case, but we include the full details due to lack of reference.

Recalling that for a prime polynomial $P$, $\Legendre{\bullet}{P}$ is the Legendre symbol as defined in \eqref{LegendreSymb}, we have \cite[Proposition 3.2]{Rosen}
\[
\Legendre{-1}{P} = (-1)^{\frac{q-1}{2} \deg P}.
\] 
Thus, if $P\neq T$, by the quadratic reciprocity law \cite[Theorem~3.3]{Rosen},
\begin{equation}\label{LegendrePT}
\Legendre{P}{ T} \Legendre{- T}{P}= \Legendre{P}{T} \Legendre{T}{P} (-1)^{\frac{q-1}{2} \deg P} = (-1)^{\frac{q-1}{2}\deg P}(-1)^{\frac{q-1}{2} \deg P} = 1.
\end{equation}

\begin{lemma}\label{lem:inertinquadratcextension}
Let $P(T)\in \FF_q[T]$ be a prime polynomial of degree $n$. Then $P(T)=P(-S^2)$ is reducible in $\FF_q[S]$ if and only if $b_q(P)=1$.  Moreover if $P=A^2 + T B^2$ for some $A,B\in \FF_q[T]$, then  $A\pm S B$ are irreducible in $\FF_q[S]$. 
\end{lemma}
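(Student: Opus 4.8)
The plan is to work inside the univariate polynomial ring $\FF_q[S]$, which is a principal ideal domain with unit group $\FF_q^\times$, and to exploit the norm map $N$ together with the conjugation $\sigma\colon S\mapsto -S$. First I would record the elementary facts I will use repeatedly. For $\pi=A+SB$ with $A,B\in\FF_q[T]$ one has $N(\pi)=A^2+TB^2\in\FF_q[T]$; moreover the image of $A$ in $\FF_q[S]$ has even $S$-degree $2\deg_T A$ while $SB$ has odd $S$-degree $2\deg_T B+1$, so these summands never cancel leading terms, whence
\[
\deg_S\pi=\max\bigl(2\deg_T A,\ 2\deg_T B+1\bigr)=\deg_T N(\pi),
\]
and for the same parity reason the leading coefficient of $A^2+TB^2$ is always a square in $\FF_q^\times$. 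Also $N$ is multiplicative and $N(f)=f\sigma(f)=f^2$ for $f\in\FF_q[T]$. Granting these, one implication is immediate: if $b_q(P)=1$, say $P=A^2+TB^2=N(A+SB)=(A+SB)(A-SB)$, then neither factor is a unit (else $P$ would be a nonzero constant, contradicting $\deg P=n\ge 1$), so $P(-S^2)$ is reducible in $\FF_q[S]$.

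For the converse, suppose $P(-S^2)$ is reducible and choose a prime divisor $\pi=A+SB$ of it in $\FF_q[S]$. Then $N(\pi)=A^2+TB^2$ divides $N(P)=P^2$ in $\FF_q[T]$, so $N(\pi)=cP^e$ with $c\in\FF_q^\times$ and $e\in\{0,1,2\}$. Here $e=0$ is impossible because $\pi$ is not a unit, and $e=2$ would give $\deg_S\pi=\deg_T N(\pi)=2n=\deg_S P(-S^2)$, forcing $\pi$ and $P(-S^2)$ to be associates and hence $P(-S^2)$ prime, contrary to assumption. So $N(\pi)=cP$. Since $P$ is monic, $c$ equals the leading coefficient of $A^2+TB^2$, which by the parity remark is a square $d^2\in\FF_q^\times$; then $P=(A/d)^2+T(B/d)^2$ with $A/d,B/d\in\FF_q[T]$, i.e.\ $b_q(P)=1$. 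Note this also covers the ramified case $P=T$ (take $\pi=S$), so no case distinction is needed.

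For the ``moreover'' part, assume $P=A^2+TB^2$. By the degree formula, $\deg_S(A+SB)=\deg_S(A-SB)=\max(2\deg_T A,2\deg_T B+1)$, and since their product $(A+SB)(A-SB)=P(-S^2)$ has $S$-degree $2n$, each of these equals $n$. If $A+SB=gh$ were a nontrivial factorization in $\FF_q[S]$, then $1\le\deg_S g\le n-1$, hence $\deg_T N(g)\le n-1$; but $N(g)$ divides $N(A+SB)=P$ and $P$ is prime, so $N(g)$ is a unit and therefore $g$ is a unit — a contradiction. Thus $A+SB$, and symmetrically $A-SB$, is irreducible in $\FF_q[S]$.

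The only real obstacle is the bookkeeping between the two degree functions $\deg_S$ and $\deg_T$, which governs both the exclusion of the case $e=2$ and the fact that the relevant leading coefficients are squares; once the identity $\deg_S\pi=\deg_T N(\pi)$ and the parity observation are in place, everything else is short. As an aside I would mention the alternative route via reduction modulo $P$: for $P\ne T$ one has $\FF_q[S]/(P)\cong(\FF_q[T]/P)[X]/(X^2+\overline T)$, which is an integral domain precisely when $-\overline T$ is a nonsquare in $\FF_q[T]/P$; this exhibits $P(-S^2)$ as reducible exactly when $\Legendre{-T}{P}=1$, equivalently (by \eqref{LegendrePT}) when $\Legendre{P}{T}=1$, and it matches the criterion above since $b_q(P)=1$ forces $(AB^{-1})^2\equiv -T\pmod P$. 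This phrasing is the natural one if later results need the Legendre-symbol characterization, but for the lemma as stated the norm argument above suffices.
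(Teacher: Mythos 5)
Your proof is correct and follows essentially the same route as the paper's: both directions use the norm map together with unique factorization in $\FF_q[T]$ and $\FF_q[S]$, the parity observation that $\deg_S A$ is even while $\deg_S(SB)$ is odd (hence $\deg_S(A+SB)=\deg_T N(A+SB)$ and the leading coefficient of $A^2+TB^2$ is a square), and the same degree count for the irreducibility of $A\pm SB$. The only cosmetic difference is that you extract a prime divisor and classify $N(\pi)=cP^e$, whereas the paper norms a two-factor decomposition directly; the substance is identical.
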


\begin{proof}
If $b_q(P)=1$, then we have 
\[
P=A^2+ T B^2 = (A+SB)(A-SB)
\]
with either $\deg_T A>0$ or $B$ nonzero. Note that $\deg_S(A)=2\deg_T(A)$ while $\deg_S(SB)=1+2\deg_T(B)$ is odd, hence $\deg_S(A\pm S B) =\max\{\deg_S(A),\deg_S(SB)\} > 0$, so $P$ is reducible in $\FF_q[S]$. 

On the other hand, if $P$ is reducible in $\FF_q[S]$, then 
\[
P=(A + BS)(C + DS).
\]
with $A,B,C,D\in \FF_q[T]$ such that both $A+BS$ and $C+DS$ are of positive $S$-degree. Applying the norm map gives
\[
P^2 = (A^2+ TB^2)(C^2+T D^2).
\]
From the unique factorization in $\FF_q[T]$, we have $ P = c(A^2+T B^2)$, for some  nonzero $c\in \FF_q$. Comparing leading coefficients we get that $c$ is a square, hence by replacing $A,B$ by $A/\sqrt{c},B/\sqrt{c}$, respectively, we get that $b_q(P)=1$,  as needed. 

Finally, assume $P=(A+SB)(A-SB)$. If $C+SD$ divides $A+SB$, then by taking norms, $C^2+ T D^2$ divides $A^2+ T B^2 = P$ in $\FF_q[T]$. Since $P$ is irreducible in $\FF_q[T]$, either $C$ is constant and $D=0$ or $C^2+ T D^2=cP$ for some $c\in \FF_q$, which implies that $\deg_S (C+SD)=\deg_S(A+SD)$, so $C+SD = c'(A+SD)$, for some $c'\in \FF_q$ and $A+SB$ is irreducible, as needed. 
\end{proof}

\begin{lemma}\label{lem:irreducibility}
If $T\neq P\in \FF_q[T]$ is irreducible in $\FF_q[T]$ but reducible in $\FF_q[S]$, then $\Legendre{P}{T}=1$. 
\end{lemma}

\begin{proof}
Since $P$ is reducible in $\FF_q[S]$, 
Lemma~\ref{lem:inertinquadratcextension} gives $b_q(P)=1$, so we may write 
\[
 P=A^2+ T B^2,
\]
$A,B\in \FF_q[T]$. Since $P\neq T$, we get that $T\nmid A$, so $P$ is a nonzero quadratic residue modulo $T$. 
\end{proof}

\begin{lemma}\label{lem:Reducible}
Let $P\in \FF_q[T]$. If $\Legendre{P}{T}=1$, then $P$ is reducible in $\FF_q[S]$.
\end{lemma}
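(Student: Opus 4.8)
The plan is to translate the hypothesis $\Legendre{P}{T}=1$ into a divisibility $P\mid C^2+T$ in $\FF_q[T]$ for a suitable $C$, and then to read off a nontrivial factorization of $P(-S^2)$ in $\FF_q[S]$ from the identity $C^2+T=(C+S)(C-S)$. It suffices to treat $P$ prime, which is the case used in the sequel: a $P$ of positive degree that is already reducible in $\FF_q[T]$ is then also reducible in $\FF_q[S]$.

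First I would carry out the reciprocity step. Since $\Legendre{P}{T}=1\neq 0$ we have $P\neq T$, so \eqref{LegendrePT} applies and gives $\Legendre{P}{T}\Legendre{-T}{P}=1$; as both symbols lie in $\{1,-1\}$, this forces $\Legendre{-T}{P}=1$. By the definition \eqref{LegendreSymb}, $(-T)^{(|P|-1)/2}\equiv 1\pmod P$; since $\FF_q[T]/(P)$ is a field with $|P|$ elements and cyclic (even-order) unit group, the elements annihilated by the exponent $(|P|-1)/2$ are exactly the nonzero squares. Hence the class of $-T$ modulo $P$, which is nonzero because $P\neq T$, is a square: there is $C\in\FF_q[T]$, which we may take with $\deg_T C<\deg P$ and $P\nmid C$, such that $C^2\equiv -T\pmod P$, that is, $P\mid C^2+T$ in $\FF_q[T]$.

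Next I would pass to $\FF_q[S]$, in which $S^2=-T$ and hence $C^2+T=C^2-S^2=(C+S)(C-S)$; thus $P=P(-S^2)$ divides the product $(C+S)(C-S)$. On the other hand $\deg_S P=2\deg_T P$, whereas $\deg_S(C\pm S)=\max\{2\deg_T C,1\}<2\deg_T P=\deg_S P$, using $\deg_T C<\deg_T P$ and $\deg_T P\geq 1$. Since $C\pm S\neq 0$ and any nonzero multiple of $P$ in the domain $\FF_q[S]$ has $S$-degree at least $\deg_S P$, it follows that $P\nmid C+S$ and $P\nmid C-S$. Therefore $P$ is not a prime element of $\FF_q[S]$; since $\FF_q[S]$ is a unique factorization domain and $P(-S^2)$ is a nonzero non-unit (of $S$-degree $2\deg_T P\geq 2$), $P$ is reducible in $\FF_q[S]$. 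One can also argue without UFD language: $G:=\gcd_{\FF_q[S]}(P,C+S)$ divides $P$, is not an associate of $P$ because $\deg_S(C+S)<\deg_S P$, and is not a unit (otherwise $P\mid C-S$), so $P=G\cdot(P/G)$ is the required nontrivial factorization.

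I do not expect a serious obstacle; once reciprocity has been invoked the rest is formal. The only point that needs a moment's care is the degree bookkeeping in $\FF_q[S]$ ruling out $P\mid C\pm S$, and in particular the edge case in which $C$ is a nonzero constant — this occurs only when $\deg P=1$, and is still covered since then $\deg_S(C\pm S)=1<2\leq\deg_S P$.
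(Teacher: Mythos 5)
Your proof is correct and follows essentially the same route as the paper: quadratic reciprocity \eqref{LegendrePT} gives $\Legendre{-T}{P}=1$, hence $P\mid C^2+T=(C+S)(C-S)$ in $\FF_q[S]$, from which reducibility is extracted. The only (cosmetic) difference is in the last step: the paper assumes $P$ irreducible in $\FF_q[S]$ and applies conjugation to get $P\mid C+S$ and $P\mid C-S$, hence $P\mid 2S$, a degree contradiction, whereas you rule out $P\mid C\pm S$ directly by normalizing $\deg_T C<\deg_T P$ and comparing $S$-degrees.
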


\begin{proof}
Assume that $P$ is irreducible in $\FF_q[S]$. 
By \eqref{LegendrePT} and by assumption it follows that 
\[
\Legendre{- T}{P}=\Legendre{P}{T} =1,
\] 
so there exists $A\in \FF_q[T]$ such that $-T\equiv A^2\mod P$; i.e.,
\[
P\mid A^2 + T = (A+S)(A-S).
\]
Since $P$ is irreducible in $\FF_q[S]$, it must divide one of the factors, say $P\mid A+S$. Applying conjugation gives that $P\mid A-S$ as well, so $P\mid 2S$. This is a contradiction since $\deg_S P=2\deg_TP\geq 2$. So $P$ is reducible in $\FF_q[S]$.
\end{proof}

\begin{proposition}\label{prop:primesinFqS}
Let $h(S) \in \FF_q[S]$ be a prime polynomial in $S$. Then exactly one of the following hold:
\begin{enumerate}
\item $N(h)=\pm P$, where $P$ is a prime of $\FF_q[T]$ with $\Legendre{P}{T}=1$. 
\item $h=\pm Q\in \FF_q[T]$ where $Q$ is a prime in $\FF_q[T]$ with $\Legendre{Q}{T}=-1$. In particular $N(h)=\pm Q^2$.
\item $N(h) = - T$.
\end{enumerate}
\end{proposition}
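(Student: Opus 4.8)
The plan is to classify primes $h(S)$ in $\FF_q[S]$ according to whether they lie over $T$, over a split prime, or over an inert prime of $\FF_q[T]$, and then use the three lemmas above to see that these three cases are exactly cases (3), (1), (2) respectively. First I would observe that since $\FF_q[S]$ is integral over $\FF_q[T]$ and the norm map is multiplicative, $N(h)$ is a nonzero element of $\FF_q[T]$, and up to the sign coming from leading coefficients we may speak of the monic prime factorization of $N(h)$ in $\FF_q[T]$. I would next argue that $N(h)$ is, up to a unit, either a prime $P$ or a prime square $P^2$ for a single prime $P\in\FF_q[T]$: indeed if $P_1\neq P_2$ both divided $N(h)$, then since $\deg_S N(h)=2\deg_S h$ one could factor $h$ by taking a common divisor of $h$ with (a preimage of) $P_1$, contradicting irreducibility of $h$ in $\FF_q[S]$; more carefully, write $N(h)=h(S)h(-S)$, and note every monic prime $P\mid N(h)$ in $\FF_q[T]$ becomes, in $\FF_q[S]$, either $P$ itself (if $P$ irreducible in $\FF_q[S]$) or a product of two conjugate primes (Lemma~\ref{lem:inertinquadratcextension}), and each such $\FF_q[S]$-prime dividing $N(h)$ must be an associate of $h$, forcing a single $P$.

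Having pinned down $N(h)=\pm P^e$ with $e\in\{1,2\}$ and $P$ a monic prime of $\FF_q[T]$, I would split into two subcases. If $P=T$: then $N(h)=\pm T$ or $\pm T^2$; but $T=(S)(-S)$ in $\FF_q[S]$ with $S$ prime there (its $S$-degree is $1$), so the prime over $T$ is $S$ itself, with $N(S)=S\cdot(-S)=-S^2=T$, wait — in our normalization $S^2=-T$ so $N(S)=-S^2=T$; matching signs as in the statement gives case (3), $N(h)=-T$ (the sign is forced by $h=cS$ and comparing with $N(cS)=c^2 S(-S)$; one checks the value is $-T$ up to the square unit $c^2$, and absorbing $c$ we get $N(h)=-T$). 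If $P\neq T$: by Lemma~\ref{lem:irreducibility} and Lemma~\ref{lem:Reducible}, $P$ is reducible in $\FF_q[S]$ if and only if $\Legendre{P}{T}=1$. When $\Legendre{P}{T}=1$, Lemma~\ref{lem:inertinquadratcextension} writes $P=(A+SB)(A-SB)$ with both factors irreducible, so $h$ is an associate of one of them and $N(h)=\pm P$ — case (1). When $\Legendre{P}{T}=-1$, $P$ is irreducible in $\FF_q[S]$, so $h$ is an associate of $P$, i.e.\ $h=\pm Q$ with $Q=P\in\FF_q[T]$ and $N(h)=N(\pm Q)=Q^2$ up to sign — case (2); and $\Legendre{P}{T}\neq 0$ since $P\neq T$. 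Finally I would check mutual exclusivity: the three cases are distinguished by the value of $N(h)$ (a prime with $\Legendre{\cdot}{T}=1$, a prime square with $\Legendre{\cdot}{T}=-1$, or $-T$), which also shows $\Legendre{P}{T}$ takes the stated nonzero value in cases (1) and (2).

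The main obstacle I anticipate is the bookkeeping in the step that forces $N(h)$ to be supported at a \emph{single} prime $P$ with exponent at most $2$: one must rule out, e.g., $N(h)=P_1P_2$ with distinct split primes, or $N(h)=P^2$ arising from $h$ being a product of two distinct $\FF_q[S]$-primes over the same $P$. The clean way is to note that in $\FF_q[S]$ — which, as a polynomial ring in one variable over a field, is a PID — the prime $h$ generates a prime ideal, and $N(h)=h(S)h(-S)$ shows $(N(h))=(h)\cap\FF_q[T]$ up to the conjugate factor; lifting a monic prime factorization of $N(h)$ to $\FF_q[S]$ via Lemma~\ref{lem:inertinquadratcextension} exhibits $h$ as an associate of exactly one of the lifted primes, and matching $S$-degrees gives $\deg_T N(h)\le 2\deg_T P$, hence $e\le 2$ (with $e=2$ precisely in the inert case). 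Everything else is a direct invocation of the three preceding lemmas together with the leading-coefficient/sign normalization already used in the proof of Lemma~\ref{lem:inertinquadratcextension}.
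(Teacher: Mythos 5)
Your proof is correct and follows essentially the same route as the paper: both classify the prime $h$ according to which rational prime of $\FF_q[T]$ divides $N(h)$ and then invoke Lemmas~\ref{lem:inertinquadratcextension}, \ref{lem:irreducibility} and \ref{lem:Reducible} to handle the split, inert, and ramified cases. The only differences are organizational (you first pin down $N(h)=\pm P^e$ with $e\le 2$, which the paper obtains implicitly by identifying $h$ up to associates in each case), together with the same harmless sign looseness in case (3) that is already present in the paper's statement, since $N(cS)=c^{2}T$ is a square unit times $T$.
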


\begin{proof}
Put $h=A+SB$ and $N(h)= (A+SB) (A-SB)=A^2+ TB^2 $. We apply freely the previous lemmas. 

Assume a prime $Q\in \FF_q[T]$ with $\Legendre{Q}{T}=-1$ divides $N(h)$. Then by Lemma~\ref{lem:irreducibility}, $Q$ is irreducible also in $\FF_q[S]$. Therefore, $Q$ divides one of the factors $A\pm SB$,  and by applying conjugation, $Q$ divides also the other. In particular,  $Q\mid h$. As $h$ is prime in $\FF_q[S]$, we have $h=c Q$, $c\in \FF_q$. The exact value of $c$ is determined by comparing the leading coefficients, and we are in case (2). 

Assume a prime $P\in \FF_q[T]$ with $\Legendre{P}{T} = 1$ divides $N(h)$. Then, by Lemma~\ref{lem:inertinquadratcextension}, $cP=(C+SD)(C-SD)$ for some nonzero $c\in \FF_q$, $C,D\in \FF_q[T]$, where $C+SD$ is irreducible. Thus, $C+SD$ divides either $A+SB$ or $A-SB$, so $C+SD = c'(A\pm SB)$, for some $c'\in \FF_q$ and we are in case (1).

Finally if we assume that $T$ divides $N(h)$, then $S$ must divide $A+SB$ or $A-SB$, and the same reasoning as above gives that $S=c(A+SB)$, for some $c\in \FF_q$, so $A=0$ and we are in case (3). 
\end{proof}

Now we can give a multiplicative description of $b_q$.

\begin{theorem}\label{thm:Fermat}
Let $f\in \EM_{n,q}$. Then the following are equivalent.
\begin{enumerate}
\item $b_q(f)=1$ 
\item Every prime $Q$ with $\Legendre{Q}{T}=-1$ appears with even multiplicity in the prime factorization of $f$.
\item Every prime $Q$ with $Q(-S^2)$ irreducible in $S$ appears with even multiplicity in the prime factorization of $f$. 
\end{enumerate}
\end{theorem}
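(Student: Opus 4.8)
The plan is to prove Theorem~\ref{thm:Fermat} by chaining together the structural results already established. The equivalence of (2) and (3) is immediate: by Lemma~\ref{lem:irreducibility} and Lemma~\ref{lem:Reducible}, for a prime $Q\neq T$ the polynomial $Q(-S^2)$ is reducible in $\FF_q[S]$ if and only if $\Legendre{Q}{T}=1$, so $Q(-S^2)$ is irreducible in $\FF_q[S]$ exactly when $\Legendre{Q}{T}=-1$ (the prime $T$ itself satisfies $\Legendre{T}{T}=0$ and $T(-S^2)=-S^2$ is reducible, so it is excluded from both conditions). Hence conditions (2) and (3) refer to the same set of primes, and they are verbatim the same statement.

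For the equivalence of (1) and (2), I would argue both directions using the multiplicativity of the norm $N$ and Proposition~\ref{prop:primesinFqS}. For the forward direction, suppose $b_q(f)=1$, so $f=N(h)$ for some $h\in \FF_q[S]$. Factor $h=\prod_i h_i$ into primes in $\FF_q[S]$; then $f=\prod_i N(h_i)$, and by Proposition~\ref{prop:primesinFqS} each factor $N(h_i)$ is, up to a unit in $\FF_q$, either a prime $P$ with $\Legendre{P}{T}=1$, or $Q^2$ for a prime $Q$ with $\Legendre{Q}{T}=-1$, or $-T$. In each case the primes $Q$ with $\Legendre{Q}{T}=-1$ enter only through the $Q^2$ factors, so they appear to even multiplicity in $f$ after collecting; since $f$ is monic this pins down the constants and gives (2). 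For the converse, suppose every prime $Q$ with $\Legendre{Q}{T}=-1$ divides $f$ to even order. Write $f=c\cdot \prod_j Q_j^{2e_j}\cdot \prod_k P_k^{a_k}\cdot T^{b}$ where the $Q_j$ are the primes with $\Legendre{Q_j}{T}=-1$, the $P_k$ are the primes with $\Legendre{P_k}{T}=1$, and $c\in \FF_q$ (in fact $c=1$ since $f$ is monic). Each $Q_j^{2}=N(Q_j)$ up to a unit; each $P_k=N(\pi_k)$ up to a unit for a suitable prime $\pi_k\in \FF_q[S]$ by Lemma~\ref{lem:inertinquadratcextension}; and $T = -N(S)$, i.e. $-T=N(S)$. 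Multiplying these norm expressions and absorbing the resulting $\FF_q$-unit (which must be a square, since $f$ and all the displayed norms are, up to units, values of the norm form and leading-coefficient comparison forces it, exactly as in the proof of Lemma~\ref{lem:inertinquadratcextension}) shows $f=N(h)$ for some $h$, i.e. $b_q(f)=1$.

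The one point that needs genuine care — and which I expect to be the main obstacle — is the bookkeeping of the units from $\FF_q^\times$ throughout the converse direction. Proposition~\ref{prop:primesinFqS} only controls norms up to sign or up to an arbitrary nonzero constant, and the norm form $N(A+SB)=A^2+TB^2$ takes monic values only for a restricted set of $(A,B)$. The resolution is the same trick used in Lemma~\ref{lem:inertinquadratcextension}: if $f = c\cdot N(h_0)$ with $f$ monic and $N(h_0)$ monic (which we can arrange by choosing each norm factor monic), then comparing leading coefficients forces $c=1$; more generally any constant discrepancy is a square because it equals a ratio of leading coefficients of norm forms, and one rescales $h_0$ by $\sqrt{c}\in \FF_q$. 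I would state this normalization once and apply it uniformly. The forward direction and the (2)$\Leftrightarrow$(3) equivalence are then routine given the lemmas and proposition already in hand.
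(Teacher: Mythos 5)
Your proof is correct and follows essentially the same route as the paper's: (2)$\Leftrightarrow$(3) via Lemmas~\ref{lem:irreducibility} and \ref{lem:Reducible}, (1)$\Rightarrow$(2) by factoring $h$ into primes of $\FF_q[S]$ and applying Proposition~\ref{prop:primesinFqS} to each factor, and (2)$\Rightarrow$(1) by exhibiting each prime-power factor of $f$ as a norm; your extra care with the $\FF_q^\times$-units is sound but in fact unnecessary, since each $P_k$ can be written exactly as $A^2+TB^2$ by the rescaling already built into Lemma~\ref{lem:inertinquadratcextension} and each $Q_j^2=N(Q_j)$ on the nose. The only slip is the sign in ``$T=-N(S)$'': since $S^2=-T$ one has $N(S)=S\cdot(-S)=-S^2=T$, so $T$ itself is a norm, which only simplifies your argument (and note the paper's own statement of case (3) of Proposition~\ref{prop:primesinFqS} carries the same stray sign).
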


\begin{proof}
(1)$\Rightarrow$(2): Assume that $b_q( f)=1$. Then  $f =N(h)$ for some $h\in \FF_q[S]$. By Proposition~\ref{prop:primesinFqS} we can write the prime factorization of  $h$ as 
\[
h=c S^a (A_1+SB_1)^{b_{11}}(A_1-SB_1)^{b_{12}} \cdots (A_r+SB_r)^{b_{r1}}(A_r-SB_r)^{b_{r2}} Q_1^{c_1} \cdots Q_k^{c_k}, \qquad a,b_{ij},c_{l}\geq 0
\]
where $N(A_i\pm SB_i)=\pm P_i$, for a prime $P_i$ of $\FF_q[T]$ with $\Legendre{P_i}{T}=1$ and $Q_i\in \FF_q[T]$ is a prime of $\FF_q[T]$ with $\Legendre{Q_i}{T}=-1$. Taking norms we get that
\[
f = c' T^{a} P_1^{b_{11}+b_{12}}\cdots P_r^{b_{r1}+b_{r2}} Q_1^{2c_1}\cdots Q_k^{2c_k}.
\]
So the multiplicities of the prime factors with $\Legendre{Q}{T}=-1$ are even.

(2)$\Rightarrow$(1):
If 
\[
f(T)=T^a P_1^{b_1} \cdots P_r^{b_r} Q_1^{2c_1}\cdots Q_k^{2c_k}
\] 
is the prime factorization of $f$, where $\Legendre{P_i}{T}=1$ and $\Legendre{Q_j}{T}=-1$, then $f$ is a norm since $T$, each of the $P_i$, and each of the $Q_j^2$ is a norm (by Proposition~\ref{prop:primesinFqS}).

(2)$\Leftrightarrow$(3): By Lemmas~\ref{lem:irreducibility} and \ref{lem:Reducible}, $\Legendre{Q}{T}=-1$ if and only if $Q$ remains irreducible in $\FF_q[S]$. Since $T=-S^2$, the latter is the same as saying that $Q(-S^2)$ is irreducible as a polynomial in $S$. 
\end{proof}

\section{Large Finite Field Limit}

Throughout this section the letter $P$ is reserved for primes with $\Legendre{P}{T}=(-1)^{\frac{q-1}{2}}$ and the letter $Q$ for primes with $\Legendre{Q}{T}=(-1)^{\frac{q-3}{2}}$.

Let $\F_n\subseteq \M_{n,q}$ be the subset of monic polynomials $f$ of degree $n$ such that every prime polynomial $Q$ divides $f$ with even multiplicity. 
So, by Theorem~\ref{thm:Fermat}, for $f\in \M_{n,q}$, we have $b_q( f)=1$ if and only if $f\in \F_n$. Thus 
\begin{equation}\label{FI0}
B_q(n) = \sum_{f\in \M_{n,q}} b_q(f) = \sum_{f\in \F_n}1=\#\F_n.
\end{equation}
Therefore to prove Theorem~\ref{thm:LF}, we have to estimate $\#\F_n$. 

\subsection{Partition of $\F_n$}
We break $\F_n$ into the following parts and evaluate each one separately:
\begin{align*} \label{eq:group_def}
 \F_{1,n}&= \{ f \in \F_n :  f =P_1 P_2\ldots P_r , P_i\neq P_j \ \forall i\neq j, r\geq 0\}, \\
 \F_{2,n}&= \{ f \in \F_n :  f =T P_1 P_2\ldots P_r,P_i\neq P_j \ \forall i\neq j, r\geq 0 \}, \\
 \F_{3,n}&= \{ f \in \F_n : f =P_1 P_2\ldots P_r Q_1^2, \deg{Q_1}=1, P_i\neq P_j \ \forall i\neq j,r\geq 0 \},  \\
 \F_{4,n}&= \{ f \in \F_n : f = P_1^2 P_2\ldots P_r,  \deg{P_1}=1,P_i\neq P_j\ \forall i\neq j ,r\geq 0\},\\
 \F_{5,n} &= \F_n \smallsetminus \bigcup_{i=1}^4 \F_{i,n}.
\end{align*}
It is clear that these sets are disjoint and that $\F_{n} = \bigcup_{i=1}^5 \F_{i,n}$, and so
\begin{equation}\label{FI}
B_q(n) = \#\F_n = \sum_{i=1}^5 \#\F_{i,n}.
\end{equation}
It is also clear that 
\begin{equation}\label{eq:F2F3}
\#\F_{2,n} = \#\F_{1,n-1} \qquad\mbox{and} \qquad \#\F_{3,n} = \frac{q-1}{2} \#\F_{1,n-2}=\frac{q}{2}\#\F_{1,n-2}+O(q^{n-2}).
\end{equation}

We start by bounding $\#\F_{5,n}$:
\begin{lemma}
\begin{equation}\label{eq:SD}
\# \F_{5,n} \leq \frac73 q^{n-2}.
\end{equation}
\end{lemma}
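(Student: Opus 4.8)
The set $\F_{5,n}$ consists of those $f\in\F_n$ that are not of any of the four "generic" shapes $\F_{1,n},\dots,\F_{4,n}$. The plan is to argue that membership in $\F_{5,n}$ forces $f$ to be divisible by a square of a prime of relatively large degree, or by a high power of a linear prime, or by two independent square factors, and that each such condition cuts down the count by at least a factor of roughly $q^{-2}$ relative to $q^n=\#\M_{n,q}$. Concretely, write $f$ via its squarefree decomposition and separate the "odd part" $g$ (the product of primes appearing to odd multiplicity, which by the definition of $\F_n$ involves only primes $P$ with $\Legendre{P}{T}=(-1)^{(q-1)/2}$, possibly including $T$) from the "square part": every $f\in\F_n$ can be written uniquely as $f=g\cdot m^2$ with $g$ squarefree. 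One checks that if $f\notin\F_{1,n}\cup\F_{2,n}$ then $m$ is nonconstant; if moreover $f\notin\F_{3,n}\cup\F_{4,n}$ then either $\deg m\ge 2$, or $m$ is linear but $g$ is neither $1$ nor $T$ times the appropriate configuration — in any case one extracts $\deg m\ge 1$ together with an extra constraint that kills one more power of $q$.

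The key step is the counting estimate: the number of $f\in\M_{n,q}$ of the form $g m^2$ with $\deg m = d$ is at most (number of monic $m$ of degree $d$) times (number of monic $g$ of degree $n-2d$), i.e.\ $q^d\cdot q^{n-2d}=q^{n-d}$. Summing the genuinely "exceptional" contributions — $\deg m\ge 2$ contributes $\sum_{d\ge 2}q^{n-d}=O(q^{n-2})$; the remaining cases with $\deg m=1$ are constrained (e.g.\ $f$ divisible by $m^2$ with $m$ linear but also by another square, or $g$ not squarefree-reduced in the allowed way) and each such sub-case is shown to have $O(q^{n-2})$ solutions by a similar product bound. Tracking the constants through the three or four sub-cases and adding them up yields the explicit bound $\tfrac73 q^{n-2}$; the fraction $\tfrac73$ is just what the crude geometric series $\sum_{d\ge 2} q^{-d}\le \tfrac{1}{q(q-1)}\le \tfrac13$ (for $q\ge 3$ odd) plus a bounded number of additional $q^{n-2}$-type terms produces.

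The main obstacle is the careful bookkeeping of exactly which configurations survive into $\F_{5,n}$, i.e.\ proving that $\F_n\setminus\bigcup_{i=1}^4\F_{i,n}$ really is contained in the union of the few "extra-constrained" families described above, with no case overlooked. Once that combinatorial classification is pinned down, the numerical estimate is routine: one only needs the trivial bound $\#\{m\in\M_{d,q}\}=q^d$ and the observation that $q\ge 3$, so that $\sum_{d\ge 2}q^{-d}$ and the finitely many stray terms combine to something bounded by $\tfrac73 q^{-2}$. No use of the prime number theorem or Ewens' formula is needed here — this lemma is purely an elementary counting bound isolating the error term.
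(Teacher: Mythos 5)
Your plan is, at its core, the same as the paper's: the key estimate in both is the trivial product bound (at most $q^{d}$ monic $m$ of degree $d$ times at most $q^{n-2d}$ cofactors), summed as a geometric series over $d\ge 2$ to give $O(q^{n-2})$, with the polynomials whose square part has degree at most $1$ handled by a separate case analysis. The counting estimates you state are correct, and no prime number theorem is needed, exactly as you say.

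The gap is precisely the step you defer as ``the main obstacle'': the classification of which $f=gm^2$ with $\deg m\le 1$ survive into $\F_{5,n}$ is never carried out, and the constant $\tfrac73$ is asserted rather than derived from it. This is not a formality. The paper itself disposes of this step with the claim that every $f\in\F_{5,n}$ is divisible by $T^2$ or by $h^2$ with $\deg h>1$, but running your own decomposition carefully shows this dichotomy misses several $\deg m=1$ families: for instance $f=T\,Q_1^2\,P_1\cdots P_r$ (excluded from $\F_{3,n}$ by the factor $T$) and $f=P_1^3P_2\cdots P_r$ with $P_1$ linear (excluded from $\F_{4,n}$ because the primes there must be distinct); neither is divisible by $T^2$ nor by any $h^2$ with $\deg h\ge 2$. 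Each such family is still $O(q^{n-2})$ by your product bound, so the conclusion $\#\F_{5,n}=O_n(q^{n-2})$ --- which is all that the proof of Theorem~\ref{thm:LF} uses --- is safe. But the explicit constant does not come out to $\tfrac73$ by this route: the geometric series alone is $\sum_{d\ge 2}q^{-d}=\tfrac{1}{q(q-1)}$, which for $q=3$ already exceeds $\tfrac43 q^{-2}$, and adding the $T^2\mid f$ case plus the two extra linear-square families pushes the crude total to roughly $\tfrac92 q^{n-2}$. So to finish you must actually enumerate the $\deg m\le 1$ configurations (your framework makes this routine but it must be done), and you should expect to prove the lemma with a larger absolute constant, or else argue much more finely to recover $\tfrac73$.
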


\begin{proof}
Clearly
\[
\#\{ f\in \F_{n} : T^2\mid f\} \leq \# \{ f\in \M_{n,q} : T^2\mid f\} = q^{n-2}.
\]
In a similar fashion, since there are $q^d$ options for monic $h$ of degree $d$, and for each $h$ there are at most $q^{n-2d}$ options for $f/h^2$, it follows that
\[
\begin{split}
\#\{ f\in \F_n : \exists h, \deg h>1, h^2\mid f\} &\leq \sum_{2\leq d\leq n/2}q^{d}\cdot q^{n-2d} \\&=q^n\sum_{2\leq d\leq n/2} q^{-d}\leq \frac{q^{n-2}}{1-q^{-2}} \leq \frac{4}{3}q^{n-2}.
\end{split}
\]

Now if $f\in \F_{5,n}$, then either $T^2$ divides $f$, or it has a divisor $h^2$, with $\deg(h)>1$. So we get the desired estimate. 
\end{proof}

\subsection{Estimating $\#\F_{1,n}$ and $\#\F_{4,n}$}
We use
the prime polynomial theorem in arithmetic progressions modulo $T$: 
Let $\alpha \in \{-1,1\}$; then 
\begin{equation}\label{PPTAP}
\pi_{q;\alpha}(n):=\#\left\{ R\in \M_{n,q} : \Legendre{R}{T} = \alpha, \mbox{ $R$ prime}\right\} = \frac{q^n}{2n} + O(q^{\lfloor n/2\rfloor}),
\end{equation}
as $q^n\to \infty$. 
We refer the reader to \cite[Theorem 4.8]{Rosen} for a proof dealing with an arithmetic progression with general modulus $m$ that gives an error term of $O(q^{n/2}/n)$. Careful examination of the proof with $m=T$, shows that in fact we get the error given in \eqref{PPTAP} since the $L$-function of the quadratic character modulo $T$ is $1$ (in the notation of the proof of \cite[Theorem 4.8]{Rosen}:  Indeed, for $\chi$ the non-trivial character modulo $T$, we have $C_{N}(\chi)=0$, $N\geq 1$ because the $C_N(\chi)$ are the $N$-th Taylor coefficients of the derivative of the the log of the $L$-function, which is zero since the $L$-function is $1$. So in equation 5 the error term is in fact $O(q^{\lfloor N/2\rfloor})$).

We need a bound on the error of \eqref{PPTAP} of size $O(q^{n-2})$ for which we have to take a more precise account when  $n=1,2$:
\begin{lemma}\label{lem:piqeps}
The following equalities hold. 
\begin{align}
\label{piqep1}\pi_{q;\alpha}(1)&= \frac{q-1}{2}\\
\label{piqep2} \pi_{q;\alpha}(2)&= \frac{1}{4}q^2-\frac{1+\alpha}{4} q +\frac{\alpha}{4}\\
\label{piqepj}\pi_{q;\alpha}(n)&= \frac{q^n}{2n}+O(q^{n-2}), & n\geq 3.
\end{align}
\end{lemma}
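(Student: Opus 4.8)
The plan is to establish the three cases of Lemma~\ref{lem:piqeps} separately, with the first two being exact counts and the third being a reduction to the already-cited prime polynomial theorem \eqref{PPTAP}.

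For \eqref{piqep1}: the monic linear polynomials are exactly $T+a$ for $a\in\FF_q$, and all of them are prime. The Legendre symbol $\Legendre{T+a}{T}$ is just the residue of $T+a$ modulo $T$, namely $a$, viewed as an element of $\FF_q^\times$ if $a\neq 0$ and $0$ if $a=0$. So $\Legendre{T+a}{T}=\alpha$ precisely when $a$ is a nonzero element of $\FF_q$ whose square class matches $\alpha$; since $q$ is odd, exactly half of the $q-1$ nonzero elements are squares and half are nonsquares, giving $\pi_{q;\alpha}(1)=\frac{q-1}{2}$ in either case. (The polynomial $T$ itself has Legendre symbol $0$ and is excluded.)

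For \eqref{piqep2}: I would count monic irreducible quadratics $R=T^2+bT+c$ with $c\neq 0$ (so that $T\nmid R$), and among those determine when $R$ is a square or nonsquare mod $T$. The total number of monic irreducible quadratics over $\FF_q$ is $\frac{q^2-q}{2}$ (by the exact formula $\frac1n\sum_{d\mid n}\mu(d)q^{n/d}$ for $n=2$, or by direct counting), and exactly $q$ of the $q^2$ monic quadratics are divisible by $T$, none of which are irreducible anyway; the $q-1$ monic quadratics of the form $T(T+a)$ with $a\neq 0$ are reducible, and $T^2$ is reducible, so all $\frac{q^2-q}{2}$ irreducible quadratics already satisfy $T\nmid R$. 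Now $\Legendre{R}{T}$ is the square class of $R(0)=c$. The cleanest route is to count, for each fixed value of the constant term $c\in\FF_q^\times$, how many $b\in\FF_q$ make $T^2+bT+c$ irreducible: this equals $\frac{q-1}{2}$ when $-c$ is... rather, I would instead use that $R=T^2+bT+c$ is reducible iff its discriminant $b^2-4c$ is a square in $\FF_q$ (including $0$), so the number of reducible monic quadratics with constant term $c$ is $\#\{b: b^2-4c\in(\FF_q^\times)^2\}+\#\{b:b^2=4c\}$. Summing the irreducible count over $c$ in the square class determined by $\alpha$ and simplifying yields the stated polynomial $\frac14 q^2-\frac{1+\alpha}{4}q+\frac{\alpha}{4}$; the two cases $\alpha=1$ and $\alpha=-1$ must be checked to give $\frac14 q^2-\frac12 q+\frac14=\frac{(q-1)^2}{4}$ and $\frac14 q^2+\frac14$ respectively, and one verifies these sum to $\frac{q^2-q}{2}$, the total number of irreducible quadratics, as a consistency check. (Alternatively, one notes $\pi_{q;1}(2)=\frac12\binom{q-1}{2}+(q-1)/2\cdot\ldots$ — the character-sum bookkeeping is the cleanest; I would present whichever is shortest.)

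For \eqref{piqepj}: this is immediate from \eqref{PPTAP}, which already states $\pi_{q;\alpha}(n)=\frac{q^n}{2n}+O(q^{\lfloor n/2\rfloor})$, together with the observation that $\lfloor n/2\rfloor\leq n-2$ for all $n\geq 3$ (indeed $n-2-\lfloor n/2\rfloor=\lceil n/2\rceil-2\geq 0$ when $n\geq 3$... more carefully, for $n=3$ we get $\lfloor 3/2\rfloor=1=n-2$, and for $n\geq 4$ we have $\lfloor n/2\rfloor\leq n/2\leq n-2$). So $O(q^{\lfloor n/2\rfloor})=O(q^{n-2})$ and \eqref{piqepj} follows. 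The main obstacle is purely the bookkeeping in the quadratic case \eqref{piqep2}: one must track the square class of the constant term against the discriminant condition and handle the parity of $q$ correctly, but there is no conceptual difficulty, and the consistency check against $\frac{q^2-q}{2}$ guards against sign errors.
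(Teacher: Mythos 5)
Parts \eqref{piqep1} and \eqref{piqepj} of your argument coincide with the paper's: linear monic polynomials $T+a$ reduce to counting square classes of $a\in\FF_q^\times$, and the $n\geq 3$ case is exactly the observation that $\lfloor n/2\rfloor\leq n-2$ applied to \eqref{PPTAP}. For \eqref{piqep2} you take a genuinely different route. The paper starts from the $q\cdot\frac{q-1}{2}$ monic quadratics $T^2+aT+b$ with $\chi(b)=\alpha$ and subtracts the reducible ones by counting ordered root pairs $(u,v)$ with $\chi(uv)=\alpha$ via the orthogonality identity $2M=\sum_{u,v}(1+\alpha\chi(uv))=(q-1)^2$, correcting for the repeated-root polynomials. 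You instead fix the constant term $c$ and count irreducibles via the discriminant criterion. Both work; the paper's version has the advantage that the only character sum needed is the trivial orthogonality $\sum_{u}\chi(u)=0$, whereas yours needs the (standard but less trivial) evaluation $\sum_{b\in\FF_q}\chi(b^2-4c)=-1$ for $c\neq 0$.

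That last sum is precisely the step you have not supplied, and it is the crux of your computation: from it one gets $\#\{b:\chi(b^2-4c)=1\}=\frac{q-1-N_0}{2}$ with $N_0=1+\chi(c)$ square roots of $4c$, hence $\frac{q-\chi(c)}{2}$ irreducible monic quadratics with constant term $c$, and then $\pi_{q;\alpha}(2)=\frac{q-1}{2}\cdot\frac{q-\alpha}{2}=\frac14 q^2-\frac{1+\alpha}{4}q+\frac{\alpha}{4}$ as claimed. Without some such evaluation, ``simplifying yields the stated polynomial'' is an assertion, not a proof. Separately, your stated value for $\alpha=-1$ is wrong: the formula gives $\frac14 q^2-\frac14=\frac{q^2-1}{4}$, not $\frac14 q^2+\frac14$, and only the former passes your own consistency check, since $\frac{(q-1)^2}{4}+\frac{q^2-1}{4}=\frac{q^2-q}{2}$ while $\frac{(q-1)^2}{4}+\frac{q^2+1}{4}=\frac{q^2-q+1}{2}$. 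So the check you propose as a safeguard would, as written, flag your own numbers.
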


\begin{proof}
Since for $a\in \FF_q$ we have $\Legendre{T-a}{T}=\Legendre{-a}{T}$ and $T-a$ is always prime, we get that 
\[
\pi_{q;\alpha}(1) = \#\{a\in \FF_q : \Legendre{a}{T}=\alpha\} = \frac{q-1}{2}. 
\]

Next we consider quadratic polynomials. Let $\chi$ be the quadratic multiplicative character on $\FF_q$, i.e.\ $\chi(u)=1$ if $u$ is a nonzero square and $\chi(u)=-1$ if $u$ is not a square. 
There are $q\cdot \frac{q-1}{2}$ polynomials $f=T^2+aT+b$ with $\Legendre{f}{T}=\chi(b)=\alpha$. 
The number $M$ of ordered pairs $(u,v)$ for which $\chi(uv)=\alpha$ satisfies 
\[
2 M=\sum_{u\in \FF_q^*}\sum_{v\in \FF_q^*}(1 + \alpha \chi(uv)) = (q-1)^2  
\]
due to the orthogonality relations of characters and since $1+\alpha \chi(w)=2$ if $\chi(w)=\alpha$ and $1+\alpha\chi\left(w\right)=0$ otherwise.

Since $f$ is reducible if and only if $f=(T-u)(T-v)$ and since $M$ counts each such reducible once if $u=v$ and twice if $u\neq v$, we get that 
\[
\pi_{q;\alpha}(2) = q\cdot \frac{q-1}{2} - M/2 -L/2,
\]
where $L$ is the number of polynomials $f$ of the form $f=(T-u)^2$, i.e.\ the number of $u$ with $\chi(u^{2})=\alpha$. Clearly $L=q-1$ if $\alpha=1$ and $L=0$ if $\alpha =-1$. Putting all of this together yields
\begin{align*}
\pi_{q;\alpha}(2)&= q\cdot \frac{q-1}{2} - \frac{(q-1)^2}{4} - \frac{1+\alpha}{4}(q-1)\\
&=\frac{q-1}{2}\cdot \frac{q+1}{2}-\frac{1+\alpha }{4}(q-1)\\
&= \frac{1}{4}q^2-\frac{1+\alpha}{4} q +\frac{\alpha}{4}.
\end{align*}

The case $n\geq 3$ follows from \eqref{PPTAP} since $\lfloor \frac{n}{2}\rfloor\leq n-2$.
\end{proof}

A \emph{partition} $\lambda\vdash n$ of a positive integer $n$ is an $n$-tuple $\lambda = (\lambda_1, \ldots, \lambda_n)$ of non-negative integers such that $\sum_{j=1}^n j\lambda_j =n$, that is, $\lambda_j$ counts the number of parts of cardinality $j$ in a partition of the set $\{1,\ldots, n\}$.
For example $\left(n,0,...,0\right)\vdash n$ and also $\left(0,...,0,1\right)\vdash n$.  
We define 
\begin{equation}\label{defhn}
h_n = 
\sum_{\lambda\vdash n} \prod_{j=1}^n \frac{1}{\lambda_j! (2j)^{\lambda_j}}.
\end{equation}

\begin{lemma}
We have
\begin{equation}\label{eq:F1}
\#\F_{1,n} = h_n q^n - \left(\frac{1}{2}h_{n-1}+\frac{3}{4}h_{n-2}\right)q^{n-1}+O_n(q^{n-2}),
\end{equation}
where $h_{n}$ is the constant from \eqref{defhn}.
\end{lemma}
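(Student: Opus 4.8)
The plan is to count $\F_{1,n}$ by passing to the exponential (Euler product / Ewens-type) generating function and extracting coefficients, then to control the error by the refined prime polynomial estimates of Lemma~\ref{lem:piqeps}. Concretely, an element $f\in\F_{1,n}$ is a squarefree product of distinct primes $P$ with $\Legendre{P}{T}=(-1)^{(q-1)/2}$; grouping primes by their degree $j$ and letting $\lambda_j$ be the number of degree-$j$ primes occurring, a polynomial contributing to the ``partition type'' $\lambda\vdash n$ requires choosing $\lambda_j$ distinct primes of degree $j$ from the $\pi_{q;\alpha}(j)$ available, for each $j$. Hence
\begin{equation}\label{eq:F1count}
\#\F_{1,n} = \sum_{\lambda\vdash n}\prod_{j=1}^n \binom{\pi_{q;\alpha}(j)}{\lambda_j}.
\end{equation}
The strategy is to expand each binomial coefficient $\binom{\pi_{q;\alpha}(j)}{\lambda_j}$ as a polynomial in $\pi_{q;\alpha}(j)$ of degree $\lambda_j$ with leading term $\pi_{q;\alpha}(j)^{\lambda_j}/\lambda_j!$, substitute the asymptotics $\pi_{q;\alpha}(j) = q^j/(2j) + (\text{lower order})$ from Lemma~\ref{lem:piqeps}, and collect powers of $q$ down to $q^{n-2}$.

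First I would identify which partitions $\lambda\vdash n$ can contribute to the top two powers $q^n$ and $q^{n-1}$. Since $\prod_j \binom{\pi_{q;\alpha}(j)}{\lambda_j}$ has $q$-degree $\sum_j j\lambda_j = n$ with leading coefficient $\prod_j \frac{1}{\lambda_j!}\cdot\frac{1}{(2j)^{\lambda_j}}$, summing over all $\lambda\vdash n$ gives exactly the $h_n q^n$ main term by the definition \eqref{defhn}. For the $q^{n-1}$ coefficient I would track two sources of a one-degree drop: (i) the subleading term in $\pi_{q;\alpha}(j)$ — by Lemma~\ref{lem:piqeps} this is nonzero only for $j=1$ (where $\pi_{q;\alpha}(1)=\frac{q-1}{2}$, giving a $-\tfrac12$ correction to the leading $\tfrac{q}{2}$) and for $j=2$ (where $\pi_{q;\alpha}(2)=\tfrac14 q^2 - \tfrac{1+\alpha}{4}q + \tfrac\alpha4$, but note the $q$-term there drops the degree by one, not contributing at level $q^{n-1}$ relative to $q^{2}$... wait, it contributes $q^{2-1}=q^{1}$, i.e.\ one below leading), and (ii) the combinatorial fact that $\binom{m}{k} = \frac{m^k}{k!} - \frac{k-1}{2}\cdot\frac{m^{k-1}}{(k-1)!} + \cdots$, so expanding $\binom{\pi_{q;\alpha}(j)}{\lambda_j}$ itself produces a term of $q$-degree $j\lambda_j - j$. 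I would organize the bookkeeping so that the $q^{n-1}$ contributions come from partitions $\lambda$ of $n$ together with a choice of either lowering one $\lambda_1$-binomial via the $(q-1)/2$ vs $q/2$ discrepancy or via $j=2$ with its linear term, or from the self-expansion of a binomial with $\lambda_j\geq 2$; re-indexing these by partitions of $n-1$ and $n-2$ should produce exactly the claimed coefficient $\tfrac12 h_{n-1}+\tfrac34 h_{n-2}$, using recurrences relating $h_{n-1}$, $h_{n-2}$ to sums over partitions of $n$ with a marked part.

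For the error control I would argue that any partition $\lambda\vdash n$ contributes to $\#\F_{1,n}$ a quantity which is $q^n$ times a polynomial in $1/q$, and that the total error beyond the $q^n$ and $q^{n-1}$ terms is $O_n(q^{n-2})$: each $\binom{\pi_{q;\alpha}(j)}{\lambda_j}$ equals its polynomial expansion exactly (no error for $j=1,2$ by Lemma~\ref{lem:piqeps}) while for $j\geq 3$ the error $O(q^{j-2})$ in $\pi_{q;\alpha}(j)$ feeds into a drop of at least two degrees; the number of partitions of $n$ is bounded in $n$, so absorbing everything into $O_n(q^{n-2})$ is routine. The main obstacle I anticipate is the second paragraph: correctly matching the combinatorial expansion of \eqref{eq:F1count} at order $q^{n-1}$ against the closed form $\tfrac12 h_{n-1}+\tfrac34 h_{n-2}$. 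This requires a clean identity expressing ``$\sum$ over $\lambda\vdash n$ with one part of size $1$ removed'' as $h_{n-1}$ (the $\tfrac12$ coefficient, from the $j=1$ correction $-\tfrac12$ relative to the leading $\tfrac{q}{2}$) and ``$\sum$ over $\lambda\vdash n$ with the self-expansion correction'' together with the $j=2$ linear-term contribution as $\tfrac34 h_{n-2}$; getting the rational constants $\tfrac12$ and $\tfrac34$ exactly right is where care is needed, and I would double-check them against small cases $n=2,3$ using \eqref{eq:F2F3}-type consistency and the explicit values from Lemma~\ref{lem:piqeps}.
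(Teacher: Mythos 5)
Your proposal follows essentially the same route as the paper's proof: the identity $\#\F_{1,n}=\sum_{\lambda\vdash n}\prod_j\binom{\pi_{q;\alpha}(j)}{\lambda_j}$, expansion of the binomial coefficients combined with the refined estimates of Lemma~\ref{lem:piqeps}, and re-indexing the $q^{n-1}$ contributions (the subleading terms of $\pi_{q;\alpha}(1)$ and $\pi_{q;\alpha}(2)$ together with the self-expansion of the $\lambda_1$-binomial) by partitions of $n-1$ and $n-2$. The only step you leave unexecuted is the final bookkeeping showing these marked-partition sums equal $\tfrac12 h_{n-1}$ and $\tfrac34 h_{n-2}$, which is precisely the computation the paper carries out via its quantities $A_\lambda$ and $B_\lambda$.
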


\begin{proof}
If $f=P_1\cdots P_r$, with distinct $P_i$ satisfying $\Legendre{P_i}{T}=1$, then the degrees of the $P_i$'s define a partition of $\lambda \vdash n$ by $\lambda_j=\#\{i:\deg P_i=j\}$. Counting the elements in $\F_{1,n}$ according to the partitions gives
\begin{equation}\label{eqF1-1}
\#\F_{1,n}  = 
\sum_{\lambda\vdash n} \prod_{j=1}^n \binom {\pi_{q;1}(j)} {\lambda_j} ,
\end{equation}
where $\pi_{q;1}(j)$ is the number of primes $P$ of degree $j$ with $\Legendre{P}{T}=1$. 

Using the formulas 
\[
\binom {x}{\lambda} = \frac{x^{\lambda}}{{\lambda}!} - \frac{\lambda(\lambda-1)}{2} \frac{x^{\lambda-1}}{\lambda!}+O(x^{\lambda-2}) \qquad \mbox{and}\qquad (x+y)^{\lambda}=x^{\lambda} + \lambda x^{\lambda-1} y + O(x^{\lambda-2})
\] 
for $\lambda\geq 0$ and $x\to \infty$, by \eqref{piqep1} we have 
\begin{equation}\label{eq:eq1}
\begin{split}
\binom {\pi_{q;1}(1)} {\lambda_1} 
		&= \frac{(q-1)^{\lambda_1}}{2^{\lambda_1}\lambda_1!}-\frac{\lambda_1(\lambda_1-1)(q-1)^{\lambda_1-1}}{2^{\lambda_1}\lambda_1!} +O(q^{\lambda_1-2})  \\
		&= \frac{1}{2^{\lambda_1}\lambda_1!}q^{\lambda_1} - \frac{\lambda_1+\lambda_1(\lambda_1-1)}{2^{\lambda_1}\lambda_1!}q^{\lambda_1-1}+O(q^{\lambda_1-2}).
\end{split}
\end{equation}

Similarly, by \eqref{piqep2} we have
\begin{equation}\label{eq:eq2}
\binom{\pi_{q;1}(2)}{\lambda_2} = \frac{1}{(2\cdot2)^{\lambda_2}\lambda_2!}q^{2\lambda_2} - \frac{2\lambda_2}{(2\cdot2)^{\lambda_2}\lambda_2!}q^{2\lambda_2-1} + O(q^{2\lambda_2 -2})
\end{equation}
and by \eqref{piqepj}, for $j\geq 3$ we have 
\begin{equation}\label{eq:eq3}
\binom {\pi_{q;1}(j)} {\lambda_j} = \frac{1}{(2j)^{\lambda_j} \lambda_j!}\cdot q^{j\lambda_j} + O(q^{j\lambda_j-2}).
\end{equation}
Applying \eqref{eq:eq1}, \eqref{eq:eq2}, and \eqref{eq:eq3} (and recalling that $n=\sum j\lambda_j$) we get, for a partition $\lambda$,
\begin{equation}
\begin{split}
\prod_{j=1}^n \binom {\pi_{q;1}(j)} {\lambda_j} 
		&= \left(\frac{1}{2^{\lambda_1}\lambda_1!}q^{\lambda_1} - \frac{\lambda_1+\lambda_1(\lambda_1-1)}{2^{\lambda_1}\lambda_1!}q^{\lambda_1-1}\right) \\
		&\quad\times \left(\frac{1}{(2\cdot2)^{\lambda_2}\lambda_2!}q^{2\lambda_2} - \frac{2\lambda_2}{(2\cdot2)^{\lambda_2}\lambda_2!}q^{2\lambda_2-1}\right) \\
		&\quad \times \prod_{j\geq 3} \frac{1}{(2j)^{\lambda_j} \lambda_j!}\cdot q^{j\lambda_j} + O(q^{n-2})\\
		&=q^n \prod_{j=1}^n \frac{1}{(2j)^{\lambda_j} \lambda_j!} -(2A_{\lambda}+B_{\lambda})q^{n-1} +O(q^{n-2}),\\
\end{split}
\end{equation}
where 
\begin{align*}
A_{\lambda}&=	\frac{\lambda_2}{(2\cdot2)^{\lambda_2}\lambda_2!}\prod_{j\neq 2}\frac{1}{(2j)^{\lambda_j} \lambda_j!} = \lambda_2 \prod_{j=1}^n \frac{1}{(2j)^{\lambda_j} \lambda_j!},\\
B_{\lambda}&= \frac{\lambda_1+\lambda_1(\lambda_1-1)}{2^{\lambda_1}\lambda_1!}
\prod_{j>1}\frac{1}{(2j)^{\lambda_j} \lambda_j!} =\lambda_1^2 \prod_{j=1}^n \frac{1}{(2j)^{\lambda_j} \lambda_j!} . 
\end{align*}
Since by definition $h_n = \sum_{\lambda\vdash n} \prod_{j=1}^n \frac{1}{(2j)^{\lambda_j} \lambda_j!}$, we have
\begin{equation}\label{eqF1n}
\#\F_{1,n} =h_n q^{n}  -  \left(2\sum_{\lambda\vdash n} A_{\lambda} + \sum_{\lambda\vdash n} B_{\lambda}\right) q^{n-1} +O(q^{n-2})
\end{equation} 
We note that $A_{\lambda}=0$ if $\lambda_2=0$ and  if $\lambda_2>0$ then $\lambda_n=\lambda_{n-1}=0$ and so 
\[
A_{\lambda} = \frac{1}{4}\prod_{j=1}^{n-2} \frac{1}{(2j)^{\mu_j} \mu_j!},
\]
where $\mu\vdash(n-2)$ is the partition defined by $\mu_{j}=\lambda_j$ for $j\neq 2$ and $\mu_2=\lambda_2-1$.
Thus,
\begin{equation}\label{eq:Alambda}
\sum_{\lambda\vdash n} A_\lambda = \sum_{\lambda\vdash n, \lambda_2>0} A_{\lambda} = \frac{1}{4}\sum_{\mu\vdash(n-2)}\prod_{j=1}^{n-2} \frac{1}{(2j)^{\mu_j} \mu_j!} = \frac14 h_{n-2}.
\end{equation}
For the $B_{\lambda}$'s we proceed in a similar manner. If $\lambda_1=0$, then $B_{\lambda}=0$; if $\lambda_1=1$ then $\lambda_n=0$ and so
\[
B_{\lambda} =\frac12 \prod_{j=1}^{n-1} \frac{1}{(2j)^{\kappa_j} \kappa_j!};
\]
and if $\lambda_1\geq 2$, then 
\[
B_{\lambda} = \frac{1}{2} \prod_{j=1}^{n-1}\frac{1}{(2j)^{\kappa_j} \kappa_j!} + \frac{1}{4}\prod_{j=1}^{n-2}  \frac{1}{(2j)^{\nu_j} \nu_j!},
\]
where $\kappa\vdash (n-1)$ and $\nu\vdash(n-2)$ are defined by $\kappa_1+1=\nu_1+2=\lambda_1$ and $\kappa_j=\nu_j=\lambda_j$, $j\geq 2$. We see that as $\lambda$ varies over all partitions of $n$, $\kappa$ and $\nu$ vary over all partitions of $n-1$ and $n-2$, respectively.  Thus,
\begin{equation}\label{eq:Blambda}
\sum_{\lambda\vdash n}  B_\lambda = \frac{1}{2}\sum_{\kappa\vdash (n-1)}\prod_{j=1}^{n-1}\frac{1}{(2j)^{\kappa_j} \kappa_j!} + \frac{1}{4} \sum_{\nu\vdash(n-2)}\prod_{j=1}^{n-2}  \frac{1}{(2j)^{\nu_j} \nu_j!} = \frac12 h_{n-1} + \frac{1}{4}h_{n-2}.
\end{equation}
Plugging in \eqref{eq:Alambda} and \eqref{eq:Blambda} in \eqref{eqF1n} gives the assertion.
\end{proof}

\begin{lemma} \label{lemma:F_4_n}
\begin{equation}\label{eq:F4}
\#F_{4,n}=\frac{1}{2}h_{n-2}q^{n-1} +O_n(q^{n-2})
\end{equation}
\end{lemma}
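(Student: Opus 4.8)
The plan is to reduce $\#\F_{4,n}$ to $\#\F_{1,n-2}$, which we already estimated in \eqref{eq:F1}, by means of a bijective decomposition analogous to the one behind \eqref{eq:F2F3}.

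First I would observe that for $f\in\F_{4,n}$, written as $f=P_1^2P_2\cdots P_r$ with $\deg P_1=1$ and $P_1,\dots,P_r$ pairwise distinct $P$-primes, the prime $P_1$ is the \emph{unique} prime factor of $f$ of multiplicity greater than one. Hence $f$ determines, and is determined by, the pair $(P_1,g)$ with $g:=f/P_1^2=P_2\cdots P_r$: here $P_1$ ranges over degree-one $P$-primes and $g$ ranges over squarefree products of $P$-primes of degree $n-2$ coprime to $P_1$, that is, over $\{g\in\F_{1,n-2}:P_1\nmid g\}$. (Conversely, any such pair yields a polynomial all of whose prime factors are $P$-primes and whose only repeated factor $P_1$ has even multiplicity, hence an element of $\F_n$, visibly lying in $\F_{4,n}$.) Summing over $P_1$,
\[
\#\F_{4,n}=\sum_{P_1}\Big(\#\F_{1,n-2}-\#\{g\in\F_{1,n-2}:P_1\mid g\}\Big).
\]

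Next I would bound the correction term: for a fixed degree-one prime $P_1$, the map $g\mapsto g/P_1$ embeds $\{g\in\F_{1,n-2}:P_1\mid g\}$ into $\M_{n-3,q}$, so this set has at most $q^{n-3}$ elements; since there are exactly $(q-1)/2$ degree-one $P$-primes by \eqref{piqep1}, the total contribution of these terms is $O_n(q^{n-2})$, and
\[
\#\F_{4,n}=\frac{q-1}{2}\,\#\F_{1,n-2}+O_n(q^{n-2}).
\]
Substituting $\#\F_{1,n-2}=h_{n-2}q^{n-2}+O_n(q^{n-3})$ from \eqref{eq:F1} (valid for all $n\ge2$ under the conventions $h_0=1$ and $h_m=0$ for $m<0$) then gives
\[
\#\F_{4,n}=\frac{q-1}{2}h_{n-2}q^{n-2}+O_n(q^{n-2})=\frac12 h_{n-2}q^{n-1}+O_n(q^{n-2}),
\]
which is \eqref{eq:F4}.

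The computation is short, and the only point calling for care is the coprimality condition $P_1\nmid g$. It is genuinely present here because the squared prime $P_1$ is itself a $P$-prime, in contrast with the identity $\#\F_{3,n}=\frac{q-1}{2}\#\F_{1,n-2}$ of \eqref{eq:F2F3}, where the squared prime is a $Q$-prime and is therefore automatically distinct from the unsquared $P$-primes, so that identity is exact. One just has to check, as above, that after summing over $P_1$ this condition perturbs the count by only $O_n(q^{n-2})$, which is absorbed into the error term anyway.
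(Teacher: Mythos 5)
Your proof is correct. It rests on the same decomposition as the paper's --- writing $f=P_1^2g$ with $P_1$ one of the $(q-1)/2$ linear $P$-primes and $g$ a squarefree product of $P$-primes of degree $n-2$ with $P_1\nmid g$ --- but the execution differs. The paper re-expands the count of the admissible $g$'s as a sum over partitions $\lambda\vdash n-2$ of products $\binom{\pi_{q;\epsilon}(1)-1}{\lambda_1}\prod_{j\geq 2}\binom{\pi_{q;\epsilon}(j)}{\lambda_j}$, encoding the coprimality condition exactly in the first binomial coefficient, and then approximates each factor to recover $h_{n-2}$. You instead quote the already-established estimate \eqref{eq:F1} for $\#\F_{1,n-2}$ and dispose of the coprimality condition by the crude bound $\#\{g\in\F_{1,n-2}:P_1\mid g\}\leq q^{n-3}$, which after summing over the $(q-1)/2$ choices of $P_1$ contributes only $O(q^{n-2})$ and is harmless at the stated precision. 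Your route is shorter, avoids repeating the partition computation, and makes transparent why \eqref{eq:F4} is only an asymptotic version of the exact identity $\#\F_{3,n}=\frac{q-1}{2}\#\F_{1,n-2}$ from \eqref{eq:F2F3}: there the squared prime is a $Q$-prime and no coprimality correction arises. The conventions you flag ($h_0=1$, $h_m=0$ for $m<0$, so that \eqref{eq:F1} applies with $n-2$ in place of $n$ for small $n$) are handled correctly.
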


\begin{proof}
Assume $f\in \F_{4,n}$, i.e.\ $f=P_1^{2} P_2\cdots P_r$ with $\deg P_1=1$ and $P_i\neq P_j$ for $i\neq j$. Thus the degrees of $P_2,\ldots, P_r$ determine a partition $\lambda=(\lambda_1,\ldots, \lambda_{n-2})\vdash n-2$ where $\lambda_1$ is the number of linear polynomials other than $P_1$ appearing in the factorization of $f$. Thus, since there are $\left(\frac{q-1}{2}\right)$ linear polynomials $P_1$, we get
\[
\#F_{4,n}
		 = \left(\frac{q-1}{2}\right)\sum_{\lambda \vdash {n-2}} \binom {\pi_{q;\epsilon}(1)-1} {\lambda_1} \prod_{j=2}^{n-2}\binom {\pi_{q;\epsilon}(j)} {\lambda_j}.
\]
Using the formula $\binom{x}{\alpha}=x^{\alpha}+O(x^{\alpha-1})$ and the estimates in Lemma~\ref{lem:piqeps} we have
\[
\#F_{4,n} = \frac{q}{2}\left(\sum_{\lambda \vdash {n-2}}  \prod_{j=1}^{n-2}\frac {\left(\frac{q^j}{2j}\right)^{\lambda_j}} {\lambda_j!}\right)+O(q^{n-2})  =\frac{1}{2}h_{n-2}q^{n-1} +O(q^{n-2}),
\]
as needed.
\end{proof}

\subsection{Proof of Theorem~\ref{thm:LF}}
By \eqref{FI}, \eqref{eq:F2F3}, \eqref{eq:SD},  \eqref{eq:F1}, and \eqref{eq:F4} we have 
\[
\begin{split}
B_q(n) &= h_n q^n + \left(\frac{1}{2}h_{n-1} -\frac{1}{4}h_{n-2} \right)q^{n-1}+ \frac{1}{2}h_{n-2}q^{n-1} +O(q^{n-2})\\
&= h_nq^n + 
\left(\frac{1}{2}h_{n-1} + \frac{1}{4}h_{n-2}\right)q^{n-1} + O(q^{n-2}).
\end{split}
\]
To conclude the proof we note that $h_n$ is the normalization factor in Ewens' sampling formula, hence it satisfies $h_n = \frac{1}{4^n}\binom{2n}{n}$, see \cite[Eq.~3]{KM} for a proof of the last equation. 
\qed

\section{
	Large Degree
}
\label{sec:LD_pf}
Our proof of Theorem~\ref{thm:xLandauLD} follows the proof of Landau's classical
theorem regarding integer sums of two squares, see \cite{Landau}, \cite[pages 257-263]{LeVeque}, or the detailed instructed exercise \cite[exercise 21 on page 187]{MV}. First we define a generating
function for our counting function $B_{q}(n)$. After a change of
variables we use the arithmetic properties of $b_{q}(f)$ to show
that our function is in fact composed of the zeta function for $\mathbb{F}_{q}\left[T\right]$,
and the Dirichlet $L$-series associated with the quadratic character
modulo $T$. We use this representation to show analytic properties
of our generating function, and using Cauchy's integration formula
we translate our counting problem to a calculation of integrals along
carefully defined curves in the complex plane. Approximating these
integrals will give the desired estimate and prove the theorem.

%
%
%We follow the proof of Landau's theorem \eqref{thm:Landau} that counts sums of two squares in $\ZZ$.

\subsection{Dirichlet Series}
Consider the generating function of $B_q(n)$:
\[
D^*(u) = \sum_{n=0}^\infty B_q(n)u^n.
\]
Since $B_q(n)\leq q^n$, the series converges  absolutely and uniformly on compact subsets of  the open disc $\{u\in \mathbb{C}:|u|<q^{-1}\}$ and we have in this region
\begin{equation}\label{eq:D*bound}
|D^*(u)| \leq \sum_{n=0}^\infty q^n |u|^n =\frac{1}{1-q|u|}
\end{equation}
Putting $u=q^{-s}$, for $s$ with $\Re(s) >1$ we have
\[
D(s)=\sum_{f \mbox{ \tiny monic}}b_q(f)\left|f\right|^{-s} = \sum_{n=0}^{\infty} B_q(n) q^{-ns} = D^*(u).
\]
Since $b_q$ is  multiplicative,  we have the factorization 
\[
D(s)=\prod_{P}\left(\sum_{k=0}^{\infty}{b(P^{k})}{\left|P\right|^{-ks}}\right),
\]
where the product is over all monic prime polynomials.  Theorem~\ref{thm:Fermat}
gives that 
\[
b(P^{k})=\begin{cases}
1  ,&\Legendre{P}{T}=1\\
1  ,& P=T\\
1  ,&\Legendre{P}{T}=-1\,\mbox{and \ensuremath{2\mid k}}\\
0  ,&\Legendre{P}{T}=-1\,\mbox{and \ensuremath{2\nmid k}}
\end{cases}
\]
and so 
\begin{align*}
D
(s) 
		& =  \left(\sum_{k=0}^{\infty}\left|T\right|^{-ks}\right)\prod_{\Legendre{P}{T}=1}\left(\sum_{k=0}^{\infty}\left|P\right|^{-ks}\right)\prod_{\Legendre{Q}{T}=-1}\left(\sum_{k=0}^{\infty}\left|Q\right|^{-2ks}\right)\\
	 	& =  \left(1-q^{-s}\right)^{-1}\prod_{\Legendre{P}{T}=1}\left(1-\left|P\right|^{-s}\right)^{-1}\prod_{\Legendre{Q}{T}=-1}\left(1-\left|Q\right|^{-2s}\right)^{-1}.
\end{align*}

We will represent $D
$ using Dirichlet series which are easier
to manipulate. We recall the basic properties of the zeta function and of Dirichlet $L$-series; we refer the reader to  \cite{Rosen} for a detailed exposition that contains proofs.
First recall  the definition of the zeta function for $\FF_q[T]$:
\[
\zeta_q(s)=\sum_{f \mbox{ \tiny monic}} |f|^{-s} = \prod_{P}\left(1-\left|P\right|^{-s}\right)^{-1},
\]
where 
$P$ runs over monic primes. It is easy to see that 
\[
\zeta_{q}(s) = \frac{1}{1-q^{1-s}}.
\]

Let $\chi\colon \FF_q[T] \to \mathbb{C}$ be the quadratic Dirichlet character modulo $T$ (i.e.\ $\chi(f) \equiv \Legendre{f}{T}\mod p$, and $p={\rm char} (\FF_q)$). Denote by $L(s)$ the associated Dirichlet $L$-series:
\[
L(s)=\sum_{f \mbox{ \tiny monic}}\chi(f)\left|f\right|^{-s}=\prod_{\Legendre{P}{T}=1}\left(1-\left|P\right|^{-s}\right)^{-1}\prod_{\Legendre{Q}{T}=-1}\left(1+\left|Q\right|^{-s}\right)^{-1}.
\]
Since $\chi$ is defined modulo a linear polynomial, we have 
\[
L(s)=1.
\]
Precisely as done by Landau in the number field setting, we arrive at the identity 
\begin{equation}\label{eq:D2}
D
^{2}(s)=\zeta_q(s)L(s)\varphi(s)=\frac{\varphi(s)}{1-q^{1-s}},
\end{equation}
for $\Re(s)>1$, 
where 
\[
\varphi(s)=\left(1-q^{-s}\right)^{-1}\prod_{\Legendre{Q}{T}=-1}\left(1-\left|Q\right|^{-2s}\right)^{-1}.
\]
We note that the expression given in \eqref{eq:D2} is simpler than the corresponding one in number fields, due to the relative simplicity of $\zeta_q$ and $L$. 
The product defining $\varphi$ converges absolutely and uniformly on compact subsets of  $\{s:\Re(2s)>1\}$, so $\varphi$ is a non-vanishing analytic function in the region $\Re(s)>1/2$.  
Moreover, if $\Re(s)\geq 1-\delta>1/2$, then 
\begin{equation}\label{bound_varphi}
\begin{split}
|\varphi(s)| &= |1-q^{-s}|^{-1} \prod_{\Legendre{Q}{T}=-1}|1-|Q|^{-2s}|^{-1} \leq \frac{|\zeta(2s)|}{1-q^{\delta-1}}  \\
&\leq \frac{1}{(1-q^{\delta-1})(1-q^{2\delta-1})}\leq \frac{1}{(1-q^{2\delta-1})}.
\end{split}
\end{equation}
Since 
\[
\psi(s)=\begin{cases}
\frac{s-1}{1-q^{1-s}},& s\neq 1
\\
\frac{1}{\log q}, &s=1
\end{cases}
\]
is a non-vanishing analytic function in the region 
\[
\B=\left \{s : \Re(s)>1/2 \mbox{ and }  |\Im(s)|< \frac{\pi i}{\log q} \right\} ,
\]
we get that 
$\varphi(s)\psi(s)$ has an analytic square root in $\B$; so we may write
\[
\sqrt{\varphi(s)\psi(s)} = a_0 + (s-1) E(s),
\] 
where $E(s)$ is analytic in $\B$. Note that 
\[
a_0 = \sqrt{\varphi(1)\psi(1)} = \sqrt{\frac{\left(1-q^{-1}\right)^{-1}\prod_{\Legendre{Q}{T}=-1}\left(1-\left|Q\right|^{-2}\right)^{-1}}{\log q}}= \frac{K_q}{\sqrt{\log q}}.
\]
Thus, by \eqref{eq:D2}, for $s\in \B \smallsetminus (-\infty,1]$, we have 
\begin{equation}\label{eq:Dapprox}
D(s) = \sqrt{\frac{\varphi(s)\psi(s)}{s-1}} = \frac{K_q}{\sqrt{\log q}}\frac{1}{\sqrt{s-1}}+ E(s)\sqrt{s-1}.
\end{equation}

\subsection{Contours}
We fix a small parameter $0<\delta <1/2$, and we take a parameter $\varepsilon>0$ that will tend to zero.  
Consider the contour defined by the following curves as shown in Figure~\ref{fig:1}:
\begin{enumerate}
\item[]  $\gamma_{1}$ = the straight line from $\left(1-\delta\right)+\frac{\pi}{\log(q)}i$
to $\left(1-\delta\right)+\varepsilon i$.
\item[]  $\gamma_{2}$ = the straight line from $\left(1-\delta\right)+\varepsilon i$
to $1+\varepsilon i$.
\item[]  $\gamma_{3}$ = the semi circle of radius $\varepsilon$ from
$1+\varepsilon i$ to $1-\varepsilon i$.
\item[]  $\overline{\gamma}_{2}$ = the straight line from $1-\varepsilon i$
to $\left(1-\delta\right)-\varepsilon i$.
\item[]  $\overline{\gamma}_{1}$ = the straight line from $\left(1-\delta\right)-\varepsilon i$
to $\left(1-\delta\right)-\frac{\pi}{\log(q)}i$.
\end{enumerate}
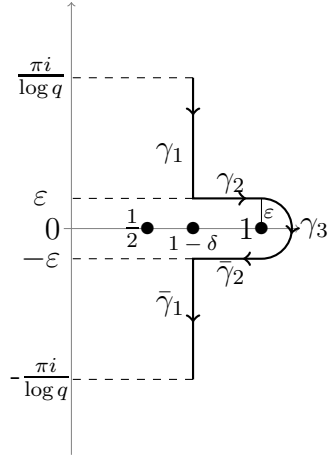
\begin{figure}[!h] 
\begin{tikzpicture}[decoration={markings,
mark=at position 0.5cm with {\arrow[line width=1pt]{>}},
mark=at position 2.3cm with {\arrow[line width=1pt]{>}},
mark=at position 3.2cm with {\arrow[line width=1pt]{>}},
mark=at position 4cm with {\arrow[line width=1pt]{>}},
mark=at position 5.5cm with {\arrow[line width=1pt]{>}}
}
]
% The axes
\draw[help lines,->] (-0.1,0) -- (3,0) coordinate (xaxis);
\draw[help lines,->] (0,-3) -- (0,3) coordinate (yaxis);

% The path
\path[draw, line width=0.8pt,postaction=decorate] 
(1.6,2) node[left] {} 
-- (1.6,0.4) node[left] {} 
-- (2.5,0.4) node[above] {} arc (90:-90:0.4) 
-- (1.6,-0.4) node[right] {} 
-- (1.6,-2) node[below] {}
;

\path[draw, dashed] (1.6,0.4) node[below] {} -- (0,0.4);
\node at (-.4,.4) {$\varepsilon$} ;
\path[draw, dashed] (1.6,2) node[below] {} -- (0,2);
\node at (-.4,2) {$\frac{\pi i}{\log q}$} ;
\path[draw, dashed] (1.6,-0.4) node[below] {} -- (0,-0.4);
\node at (-.4,-.4) {$-\varepsilon$} ;
\path[draw, dashed] (1.6,-2) node[below] {} -- (0,-2);
\node at (-.4,-2) {-$\frac{\pi i}{\log q}$} ;

\path[draw, line width = 0.4pt] (2.5,0) node[below] {} -- (2.5,0.4);

%Bullets
\node at (2.5,0) {$\bullet$};
\node at (1,0) {$\bullet$};
\node at (1.6,0) {$\bullet$};

%The labels
\node[left] {$0$};
\node at (2.3,0) {$1$};
\node at (0.8,0) {$\frac{1}{2}$};
\node at (1.6,-.2) {{\tiny $1-\delta$}};
\node at (1.3,1) {$\gamma_1$};
\node at (1.3,-1) {$\bar{\gamma}_1$};
\node at (2.1,0.6) {${\gamma}_2$};
\node at (2.1,-0.6) {$\bar{\gamma}_2$};
\node at (3.2,0) {$\gamma_3$};
\node at (2.6,0.2) {{\tiny $\varepsilon$}};

\end{tikzpicture}

\caption{The contour in the $s$-plane}
\label{fig:1}
\end{figure}

Under the transformation $s\to u=q^{-s}$ these curves turn
into the following curves, as shown in Figure~\ref{fig:2}:
\begin{enumerate}
\item[]  $\Gamma_{1}$ = semi circle  of radius $q^{\delta-1}$ from $-q^{\delta-1}$
to $q^{\delta-1}e^{-i\varepsilon\log q}$.
\item[] $\Gamma_2$ = straight line  from $q^{\delta-1}e^{-i\varepsilon\log q}$
to $q^{-1}e^{-i\varepsilon\log q}$.
\item[] $\Gamma_{3}$ = arc from $q^{-1}e^{-i\varepsilon\log q}$ to $q^{-1}e^{i\varepsilon\log q}$
through $q^{-1-\varepsilon}$.
\item[] $\overline{\Gamma}_{2}$ = straight line  from $q^{-1}e^{i\varepsilon\log q}$
to $q^{\delta-1}e^{i\varepsilon\log q}$.
\item[] ${\overline{\Gamma}}_{1}$ = semi circle of radius $q^{\delta-1}$
from $q^{\delta-1}e^{i\varepsilon\log q}$ to $-q^{\delta-1}$.
\end{enumerate}

\begin{figure}[t]

\begin{tikzpicture}[decoration={markings,
mark=at position 3cm with {\arrow[line width=1pt]{<}},
mark=at position 6.2cm with {\arrow[line width=1pt]{<}},
mark=at position 6.8cm with {\arrow[line width=1pt]{<}},
mark=at position 7.5cm with {\arrow[line width=1pt]{<}},
mark=at position 10cm with {\arrow[line width=1pt]{<}}
}
]
% The axes
\draw[help lines,->] (-3,0) -- (3,0) coordinate (xaxis);
\draw[help lines,->] (0,-3) -- (0,3) coordinate (yaxis);

% The path
\path[draw, line width=0.8pt,postaction=decorate] 
(-2,0) node[above] {} arc (180:10:2) -- (10:1.5)  .. controls (1.2,0) .. (-10:1.5) -- 
(-10:2) arc (-10:-180:2)
;

\node at (1.5,0) {$\bullet$};

\draw[line width=0.2pt] (0,0) circle (1.5);
\draw[line width=0.2pt] (0,0) circle (1.25);
\draw[line width=0.2pt] (0,0) -- (120:2) node[right] {$q^{\delta-1}$};
\draw[line width=0.2pt] (0,0) -- (140:1.5) node[right] {$q^{-1}$};
\draw[line width=0.2pt] (0,0) -- (200:1.25) node[right] {$q^{-1-\varepsilon}$};

%The labels
\node at (0,2.4) {$\bar{\Gamma}_1$};
\node at (0,-2.4) {$\Gamma_1$};
\node at (1.7,0.6)  {$\bar{\Gamma}_2$};
\node at (1.6,-0.6) {${\Gamma}_2$};
\node at (1,0) {$\Gamma_3$};

\end{tikzpicture}
\caption{The contour $C_{\delta}$ in the $u$-plane}\label{fig:2}
\end{figure}
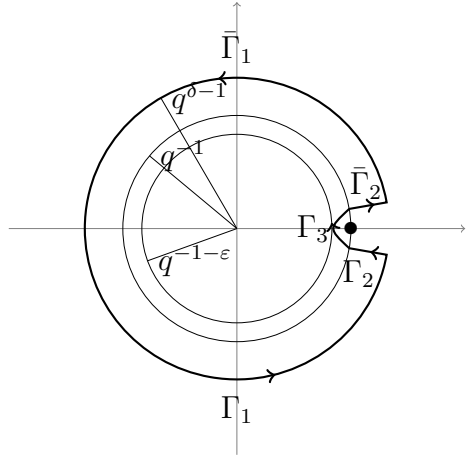
Denote by $C_{\delta}$ the contour defined by these curves directed
counterclockwise. 
Since $D(s)$ is analytic in $\B\smallsetminus (-\infty,1]$, we get that 
$D^*(u)$ is analytic inside $C_{\delta}$.   
By Cauchy's integration formula we have 
\begin{equation}\label{eq:CF}
B_q(n) = \frac{1}{2\pi i}\int\limits _{C_{\delta}}\frac{D^*(u)}{u^{n+1}}du.
\end{equation}
We note that Cauchy's integration formula comes
in place of Perron's formula in the proof of Landau's classical theorem.

Next we turn to calculate the integral along each of the curves composing $C_{\delta}$.
As in the classical
proof, the integrals which play the major role in our calculation
of the main term and error term stated in Theorem~\ref{thm:xLandauLD} are the integrals
along the curves $\Gamma_{2}$ and $\overline{\Gamma}_{2}$ (which
are the images of the curves $\gamma_{2}$ and $\overline{\gamma}_{2}$
under the transformation $s\rightarrow q^{-s})$, where the integrals
along the rest of $C_{\delta}$ will be roughly approximated and their
values absorbed in the error term.

\subsubsection{The integrals along $\Gamma_1$ and $\bar{\Gamma}_1$}
By \eqref{bound_varphi} and \eqref{eq:D2} we have 
\[
|D(s)|\leq M_{\delta,q}:= \frac{1}{\sqrt{q^\delta(1-q^{2\delta-1})}}
\]
on $\gamma_1 \cup \bar{\gamma}_1$.  Thus $|D^*(u)|\leq M_{\delta,q}$ on $\Gamma_1\cup \bar{\Gamma}_1$ and trivially one has $|1/u^{n+1}|=q^{(1-\delta)(n+1)}$ there; so 
\begin{equation}\label{int:Gamma1}
\left|\frac{1}{2\pi i} \int_{\Gamma_1\cup \bar{\Gamma}_1} \frac{D^*(u)}{u^{n+1} } du\right| \leq \frac{2\pi q^{\delta-1}}{2\pi} q^{(1-\delta)(n+1)} M_{\delta,q}\leq 
Cq^{(1-\delta)n},
\end{equation}
where $C>0$ depends on $\delta$ and $q$.

\subsubsection{The integral along $\Gamma_3$}
On $\gamma_3$ we have $|D(s)| \ll_{q} \frac{1}{\sqrt{\varepsilon}}$. Using  $|q^{ns}|\leq q^{n(1+\varepsilon)}$ and 
\begin{equation}\label{eq:changeofvariables}
du = -\log q \cdot q^{-s} 
ds,
\end{equation}
 we get that 
\begin{equation} \label{int:Gamma3}
\left|\frac{1}{2\pi i}\int_{\Gamma_3} \frac{D^*(u)}{u^{n+1}}du \right| =\frac{\log q}{2\pi } \left| \int_{\gamma_3} D(s)q^{ns} ds \right|\ll_{q,\delta}\frac{q^{n(1+\varepsilon)}\varepsilon}{\sqrt{\varepsilon}} =q^{n(1+\varepsilon)}\sqrt{\varepsilon}\to 0, 
\end{equation}
as $\varepsilon \to 0$.
\subsubsection{The integrals along $\Gamma_2$ and $\bar{\Gamma}_2$}

Let us abbreviate and write 
\begin{equation}\label{I_2}
I_{2,\varepsilon} = \frac{1}{2\pi i} \int_{\Gamma_2} \frac{D^*(u)}{u^{n+1}}du \quad \mbox{and} \quad \bar{I}_{2,\varepsilon} = \frac{1}{2\pi i} \int_{\bar{\Gamma}_2} \frac{D^*(u)}{u^{n+1}}du.
\end{equation}
By \eqref{eq:Dapprox} and \eqref{eq:changeofvariables} we have 
\begin{equation}\label{eq:I2epsplus}
\begin{split}
I_{2,\varepsilon} &= \frac{\log q}{2\pi i} \int_{1-\delta}^1 D(t+i\varepsilon)q^{n(t+i\varepsilon)} dt \\
&=\frac{\sqrt{\log q} K_q}{2\pi i} \int_{1-\delta}^1 \frac{q^{n(t+i\varepsilon)}}{\sqrt{t+i\varepsilon-1}} + \frac{\log q}{2\pi i} \int_{1-\delta} E(t+i\varepsilon)q^{n(t+i\varepsilon)}\sqrt{t+i\varepsilon-1}dt.
\end{split}
\end{equation}
We note that the two integrals are continuous at $\varepsilon = 0^{+}$, and so
\begin{equation}\label{eq:I2ep0}
\begin{split}
\lim_{\varepsilon\to 0^+}I_{2,\varepsilon} =\frac{\sqrt{\log q} K_q}{2\pi i} \int_{1-\delta}^1 \frac{q^{nt}}{\sqrt{t-1}} + \frac{\log q}{2\pi i } \int_{1-\delta}^1 E(t)q^{nt}\sqrt{t-1}dt.
\end{split}
\end{equation}
By Watson's lemma (see e.g.\ \cite[Page~103]{BH}), for $\alpha$, $x\geq 1$, $1>\delta>2\rho>0$, one has
\begin{equation}\label{eq:integration}
\int_{1-\delta}^1(1-t)^\alpha x^t dt = \int_{0}^\delta x^{1-u}u^\alpha du = \frac{x}{(\log x)^{\alpha+1}} \Gamma(\alpha+1) + O(x^{1-\rho}),
\end{equation}
as $x\to \infty$. Here $\Gamma(\alpha+1)=\int_{0}^\infty v^{\alpha} e^{-v}dv$ is the gamma function.

For the first integral, we see that by \eqref{eq:integration} and since $\Gamma(1/2)=\sqrt{\pi}$ we have 
\[
\begin{split}
\frac{\sqrt{\log q} K_q}{2\pi i } \int_{1-\delta}^1 \frac{q^{nt}}{\sqrt{t-1}}dt
&=\frac{1}{2}\frac{K_q}{\sqrt{\pi}}\frac{q^n}{n} + O(q^{n(1-\delta/4)}).
\end{split}
\]
For the second integral we use that $E$ is analytic, hence bounded in the region, say by $C=C_{\delta,q}$. So by \eqref{eq:integration} (with $\alpha=1/2$)
\[
\left|\int_{1-\delta}^1 E(t)q^{nt}\sqrt{t-1}dt\right|\leq C \frac{q^n}{n^{3/2}}.
\]
From this we derive
\begin{equation}\label{eq:calcal}
\lim_{\varepsilon\to 0^+}I_{2,\varepsilon} = \frac{1}{2}\frac{K_q}{\sqrt{\pi}}\frac{q^n}{n} + O_{\delta,q}\left(\frac{q^n}{n^{3/2}}\right), \qquad n\to \infty.
\end{equation}

The calculation of $\bar{I}_{2,\varepsilon}$ is similar to the above calculation of ${I}_{2,\varepsilon}$ with two changes: The first is that the direction of $\bar\gamma_2$ is opposite to that of $\gamma_2$ and the second is that instead of $\sqrt{t+i\varepsilon-1}$, we work with $\sqrt{t-i\varepsilon -1}$. Each of these changes contributes a change of sign in the limit $\varepsilon \to 0^+$, and so
\begin{equation}\label{eq:eqeqeq}
\lim_{\varepsilon\to 0^+} \bar{I}_{2,\varepsilon}=\lim_{\varepsilon\to 0^+} I_{2,\varepsilon}.
\end{equation}
By  \eqref{I_2},\eqref{eq:calcal} and \eqref{eq:eqeqeq} we have
\begin{equation}\label{eq:G2Gbar2}
\begin{split}
\lim_{\varepsilon\to 0^{+}} \frac{1}{2\pi i}\int_{\Gamma_2\cup \bar{\Gamma}_2} \frac{D^*(u)}{u^{n+1}} 
&= \frac{K_q}{\sqrt{\pi}}\frac{q^n}{n} + O_{\delta,q}\left(\frac{q^n}{n^{3/2}}\right), \qquad n\to \infty.
\end{split}
\end{equation}

\subsection{Proof of Theorem~\ref{thm:xLandauLD}}
By \eqref{eq:CF}, the definition of the contour $C_\delta = \Gamma_1\cup\Gamma_2\cup \Gamma_3\cup\bar\Gamma_2\cup \bar\Gamma_1$,   \eqref{eq:G2Gbar2}, \eqref{int:Gamma1}, and 
\[
\begin{split}
B_q(n) &= \frac{1}{2\pi i}\int\limits _{C_{\delta}}\frac{D^*(u)}{u^{n+1}}du \\
	&= \lim_{\varepsilon\to 0^+} 
	\frac{1}{2\pi i}\int\limits _{\Gamma_2\cup\bar{\Gamma}_2}\frac{D^*(u)}{u^{n+1}}du 
	+\frac{1}{2\pi i}\int\limits _{\Gamma_1\cup \bar{\Gamma}_1}\frac{D^*(u)}{u^{n+1}}du 
	+\frac{1}{2\pi i}\int\limits _{\Gamma_3}\frac{D^*(u)}{u^{n+1}}du \\
	&=
	\frac{K_q}{\sqrt{\pi}} \frac{q^n}{n} + O_{\delta,q}\left(\frac{q^n}{n^{3/2}}\right)  
	+ O(q^{(1-\delta)n}) + 0\\
	&= \frac{K_q}{\sqrt{\pi}} \frac{q^n}{n} + O_{\delta,q}\left(\frac{q^n}{n^{3/2}}\right) ,
\end{split}
\]
where all the implied constants depend on $q$ and $\delta$. This finishes the proof. \qed

\bibliographystyle{plain}
\bibliography{SoSbib}{}

%
%\begin{thebibliography}{1}
%
%\bibitem{KM}
%S.~Karlin and J.~McGregor.
%\newblock Addendum to a paper of {W}. {E}wens.
%\newblock {\em Theoret. Population Biology}, 3:113--116, 1972.
%
%\bibitem{HMW}
%Jeffrey Hoffstein, Kathy~D. Merrill, and Lynne~H. Walling.
%\newblock Automorphic forms and sums of squares over function fields.
%\newblock {\em J. Number Theory}, 79(2):301--329, 1999.
%
%\bibitem{Landau}
%Edmund Landau.
%\newblock { \"{U}ber die Einteilung der positiven ganzen Zahlen in vier Klassen
%  nach der Mindeszahl der zu ihrer additiven Zusammensetzung erforderlichen
%  Quadrate }.
%\newblock {\em Arch. Math. Phys.}, 13:305--312, 1908.
%
%\bibitem{MW}
%Kathy~D. Merrill and Lynne~H. Walling.
%\newblock Sums of squares over function fields.
%\newblock {\em Duke Math. J.}, 71(3):665--684, 1993.
%
%
%\bibitem{Rosen}
%Michael Rosen.
%\newblock {\em Number theory in function fields}, volume 210 of {\em Graduate
%  Texts in Mathematics}.
%\newblock Springer-Verlag, New York, 2002.
%
%\end{thebibliography}

\end{document}